\definecolor{todocolor}{rgb}{1.0, 0.0, 0.5}
 \newtheorem{thm}{Theorem}[section]
 \newtheorem{cor}[thm]{Corollary}
 \newtheorem{lem}[thm]{Lemma}
 \theoremstyle{definition}
 \theoremstyle{remark}
 \numberwithin{equation}{section}
\begin{document}

\title{The Deltoid Curve and Triangle Transformations}
\author{Michael Q. Rieck }
\date{ \ } 
\maketitle
\begin{abstract}

Deltoid curves appear as consequences of certain procedures in triangle geometry. The best known of these is the construction based on Simson lines, described by Steiner. This is carefully related, in this article, to a less known construction. The standard deltoid in the complex plane and its tangent lines are principle objects of study in this report. It is known that each point in the interior of this curve is the orthocenter of a triangle with distinct vertices on the unit circle, whose product is one. (If instead the point is on the deltoid, then at least two of the vertices coalesce, resulting in a degenerate triangle.) 

When the vertices are all raised to some specified integer power, a new (possibly degenerate) triangle results. By varying the triangle, one may thus consider the map taking the original triangle's orthocenter to the resulting triangle's  orthocenter.  Such maps are the other principle objects of study here. The points that are mapped to the deltoid lie on easily described curves. By varying the power involved in the map, a pleasing family of curves results, which includes a trifolium curve. The points that are mapped instead to the origin are described as the points of intersection of certain tangents to the deltoid. 

\end{abstract}




\section{Introduction}\label{sec:intro}

Deltoid curves, also called tricuspids/tricuspoids, are easily described by rolling a circle inside a circle whose radius is three times bigger than that of the rolled circle. A point fixed relative to the rolled circle travels along a deltoid curve in the plane for which the larger circle is fixed (cf. \cite{F}, \cite{M}, and \cite{P}). All deltoids in a plane are of course equivalent in the sense that any one of them can be transformed into any other one by a combination of scaling, rotating and translating. It is convenient and sufficient throughout this paper to focus solely on the ``standard" deltoid in the Cartesian plane, which satisfies the equation 

\begin{equation}
(x^2 + y^2)^2 + 18 (x^2 + y^2) - 8 x^3 + 24 x y^2 - 27 \; = \; 0.
\end{equation}

\noindent When the plane is regarded as the complex plane, as will be the case throughout, the equation can be rewritten as 

\begin{equation}
z^2 \, \bar{z}^2  - 4(z^3 + \bar{z}^3 ) + 18 z \bar{z} - 27 \; = \; 0.
\end{equation}

\noindent This deltoid can also be expressed as the curve traced out by 

\begin{equation}
z \; = \; 2 e^{i \theta} + e^{-2 i \theta}
\end{equation}

\noindent as the real parameter $\theta$ ranges from say $-\pi$ to $\pi$ (cf. \cite{GPP}).  

\begin{lem}
For fixed $\theta$, the line segment connecting the two points $\pm 2 e^{i \theta} + e^{-2 i \theta}$ on the deltoid
is tangent to the deltoid at the point $e^{4 i \theta} + 2 e^{-2 i \theta}$. Also, this segment has slope $\tan \theta$ and length 4. 
\end{lem}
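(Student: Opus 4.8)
The plan is to work entirely with the trigonometric parametrization $z(\phi) = 2e^{i\phi} + e^{-2i\phi}$ rather than with the implicit quartic, since the tangent direction to the curve is then immediate from a single differentiation. I would first record that both endpoints genuinely lie on the deltoid: $P_+ := 2e^{i\theta} + e^{-2i\theta} = z(\theta)$ trivially, while $P_- := -2e^{i\theta} + e^{-2i\theta} = z(\theta+\pi)$, using $e^{-2i(\theta+\pi)} = e^{-2i\theta}$. The length and slope claims then drop out at once from
\[
P_+ - P_- \;=\; 4\,e^{i\theta},
\]
whose modulus is $4$ and whose argument is $\theta$, giving slope $\tan\theta$.

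Next I would pin down the alleged contact point both on the curve and on the chord. Setting $\phi_0 := -2\theta$ one has $z(\phi_0) = 2e^{-2i\theta} + e^{4i\theta}$, which is precisely $Q := e^{4i\theta} + 2e^{-2i\theta}$; hence $Q$ is on the deltoid. Parametrize the chord through $P_\pm$ as $\ell(t) := e^{-2i\theta} + 2t\,e^{i\theta}$, so that $\ell(\pm 1) = P_\pm$. Since
\[
Q - e^{-2i\theta} \;=\; e^{4i\theta} + e^{-2i\theta} \;=\; 2\cos(3\theta)\,e^{i\theta},
\]
we get $Q = \ell(\cos 3\theta)$, so $Q$ lies on the chord — in fact on the closed segment, because $|\cos 3\theta| \le 1$.

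Finally I would compare directions. Differentiating gives $z'(\phi) = 2i\bigl(e^{i\phi} - e^{-2i\phi}\bigr)$, and at $\phi_0 = -2\theta$ a short factorization yields
\[
z'(-2\theta) \;=\; 2i\bigl(e^{-2i\theta} - e^{4i\theta}\bigr) \;=\; 4\sin(3\theta)\,e^{i\theta},
\]
a real multiple of $e^{i\theta}$ — exactly the direction of the chord $P_+P_-$. Combined with the previous paragraph, this shows the chord is the tangent line to the deltoid at $Q$, which finishes the proof. The one situation needing a separate word is $\sin 3\theta = 0$, i.e. $\theta$ a multiple of $\pi/3$, where $z'(-2\theta) = 0$ and $Q$ is a cusp; there I would instead compute $z''(-2\theta) = -2e^{i\theta}\bigl(e^{-3i\theta} + 2e^{3i\theta}\bigr) = \mp 6\,e^{i\theta}$, still parallel to $e^{i\theta}$, so the cusp's limiting tangent is again the chord. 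I expect this degenerate case to be the only mild subtlety; every other step is a one-line identity in the parametrization.
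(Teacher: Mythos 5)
Your proof is correct and follows essentially the same route as the paper's: all three points are placed on the deltoid by the substitutions $\theta \mapsto \theta+\pi$ and $\theta \mapsto -2\theta$, the contact point is located on the closed segment via the coefficient $\cos 3\theta$, and tangency is verified by matching the tangent direction at $e^{4i\theta}+2e^{-2i\theta}$ with the chord direction $e^{i\theta}$. The only differences are improvements in execution on your part: you differentiate the complex parametrization directly at $\phi=-2\theta$ to get $4\sin(3\theta)\,e^{i\theta}$ in one line (where the paper computes a real $dy/dx=-\tan\frac{\theta}{2}$ at the other endpoint via several trig identities and then substitutes $\theta\mapsto-2\theta$), and you explicitly handle the cusp case $\sin 3\theta=0$ via the second derivative, a degeneracy the paper's slope computation passes over silently.
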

\begin{proof}
First we must establish that the points  $-2 e^{i \theta} + e^{-2 i \theta}$ and $e^{4 i \theta} + e^{-2 i \theta}$ are indeed on the deltoid; $2 e^{i \theta} + e^{-2 i \theta}$ certainly is. Replacing $\theta$ by $\theta + \pi$ in $2 e^{i \theta} + e^{-2 i \theta}$  yields $-2 e^{i \theta} + e^{-2 i \theta}$, so $-2 e^{i \theta} + e^{-2 i \theta}$ is on the deltoid.  Replacing $\theta$ by $-2 \theta$ in $2 e^{i \theta} + e^{-2 i \theta}$  yields $e^{4 i \theta} + 2 e^{-2 i \theta}$, so $e^{4 i \theta} + 2 e^{-2 i \theta}$ is on the deltoid. 

Next, we need to know that the points $\pm 2 e^{i \theta} + e^{-2 i \theta}$ and $e^{4 i \theta} + 2 e^{-2 i \theta}$ are collinear. $(e^{4 i \theta} + 2 e^{-2 i \theta}) - (\pm 2 e^{i \theta} + e^{-2 i \theta}) = e^{4 i \theta} + e^{-2 i \theta} \pm 2 e^{i \theta} = e^{i\theta} (e^{3 i \theta} + e^{-3 i \theta} \pm 2)  =  2 \, e^{i\theta} (\cos{3\theta} \pm 1) $. The tangent of the argument of this is just $\tan \theta$, regardless of which sign we use for ``$\pm$." So it is clear that the three points are collinear and lie on a line with slope $\tan \theta$. 

Now consider the tangent line to the deltoid at $z = x + i y = 2 e^{i \theta} + e^{-2 i \theta}$. This has slope  

$$\frac{dy}{dx} \; = \; \frac{\; \frac{dy}{d\theta} \; }{\; \frac{dx}{d\theta} \; } \; = \; \frac{\frac{d}{d\theta} \left[ 2 \sin \theta - \sin 2\theta \right] }{\frac{d}{d\theta} \left[ 2 \cos \theta + \cos 2\theta \right]} \; = \; \frac{2\cos\theta-2\cos 2\theta}{-2\sin\theta - 2\sin 2\theta}  \; = \; \frac{\cos 2\theta-\cos \theta}{\sin 2\theta + \sin \theta} \; = $$

$$\frac{2\cos^2\theta - \cos\theta - 1}{\sin\theta \, (2\cos t +1)} \; = \; \frac{(\cos\theta - 1)(2\cos\theta + 1)}{\sin\theta \, (2\cos t +1)} \; = \;  \frac{\cos\theta - 1}{\sin\theta} \; = \; \frac{-\sin\theta}{\cos\theta + 1} \; = \;  - \tan \frac{\theta}{2}.$$

\noindent By replacing $\theta$ with $-2\theta$, we see that the slope of the tangent line at $e^{4 i \theta} + 2 e^{-2 i \theta}$ is just $\tan \theta$, and therefore this must be the line connecting the two points $\pm 2 e^{i \theta} + e^{-2 i \theta}$.

Of course, the segment has length 4 since $(2 e^{i \theta} + e^{-2 i \theta}) - (-2 e^{i \theta} + e^{-2 i \theta}) = 4e^{i \theta}$. All that remains is to show that the point $e^{4 i \theta} + 2 e^{-2 i \theta}$ lies between the points $\pm 2 e^{i \theta} + e^{-2 i \theta}$.  But we can write $2 \, (e^{4 i \theta} + 2 e^{-2 i \theta}) = (1+\lambda)(2 e^{i \theta} + e^{-2 i \theta}) + (1-\lambda)(-2 e^{i \theta} + e^{-2 i \theta}) \; = \; 4 \lambda e^{i\theta} + 2 e^{-2 i \theta}$. Solving for $\lambda$, we get $\lambda = \frac{1}{2} ( e^{3 i \theta} + e^{-3 i \theta}) = \cos 3 \theta$. 
Since $|\lambda| \le 1$, it is now evident that $e^{4 i \theta} + 2 e^{-2 i \theta}$ lies between $\pm 2 e^{i \theta} + e^{-2 i \theta}$ (not necessarily strictly though).   

\end{proof}

The line segments described in the lemma are of particular importance. Because of their well-known connection with the Kakeya needle problem, they might be called {\it needle positions}, though they will just be referred to as {\it needles} here. With the above foundation, we will now proceed to present a few interesting and related phenomena, some of which are already known, but most of which appears to be new.  All of it is concerned with the geometry of a triangle, and very likely there are additional possible connections with the subject of triangle geometry that could be discovered. Actually, an already known connection involves rectangular circum-hyperbolas, though this will not be explored in this paper (cf. \cite{C}). It would also be desirable to better relate these topics to a certain three-dimensional geometry problem (cf. \cite{R}).

\section{Deltoids produced by triangle-related constructions}

\begin{figure}[ht]
  \centering
  \includegraphics[width=10cm]{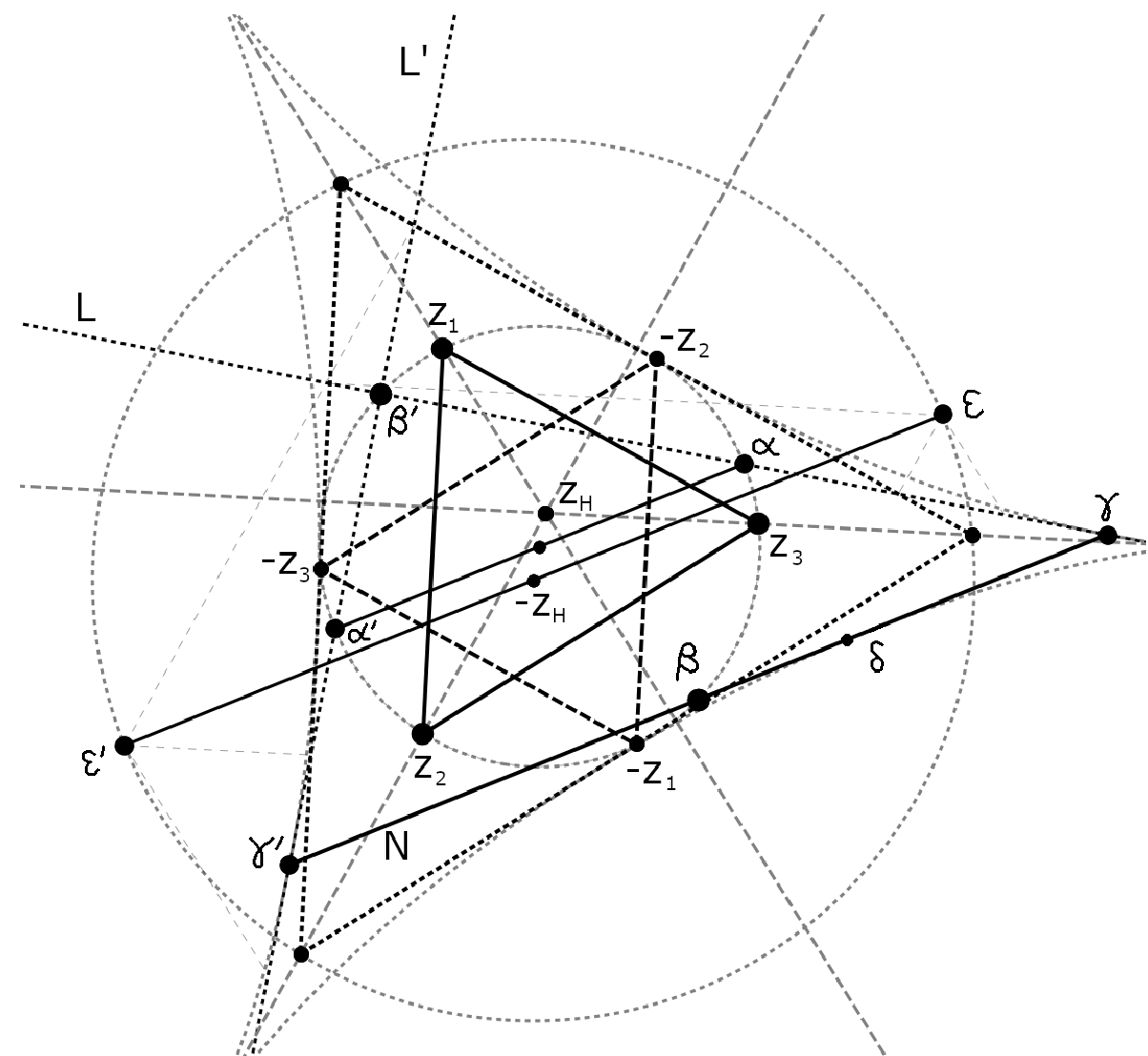} 
  \caption{Three triangles and a deltoid}
  \label{fig:triangles}
\end{figure}

Two of the better known deltoid constructions that result from studying triangle geometry are the Steiner deltoid and a deltoid that Kimberling describes in Chapter 6 of his book \cite{K}. Being unaware of any published proof of the existence of the latter, nor any published connection of it to the Steiner deltoid, such a connection will now be presented and proved. However, we will approach these constructions in reverse, by {\it starting} with the deltoid. Nevertheless, in this way we are able to establish the correctness of the two constructions. 

Beginning with the standard deltoid, and for any given real number $\theta$, let us identify the following points for discussion: $\alpha = e^{i \theta}$, $\alpha' = -\alpha = -e^{i \theta}$, $\beta = e^{-2i \theta}$, $\beta' = -\beta = -e^{-2i \theta}$,  $\gamma = e^{-2i \theta} + 2 e^{i \theta}$, $\gamma' = e^{-2i \theta} - 2 e^{i \theta}$, $\delta =  e^{4 i \theta} + 2 e^{-2 i \theta}$. As we know from Lemma 1.1, $\gamma$, $\gamma'$ and $\delta$ are collinear and lie on the deltoid, with $\delta$ between $\gamma$ and $\gamma'$, and this line is tangent to the deltoid at $\delta$ with slope $\tan \theta$. Referring to the segment of this line connecting $\gamma$ and $\gamma'$ as the ``needle" $N$, the midpoint of this needle is $\beta$ and this lies on the unit circle as well as on $N$. We now need to look at two other lines. Let $L$ be the line connecting $\gamma$ and $\beta'$, and let $L'$ be the line connecting $\gamma'$ and $\beta'$. So $L$ and $L'$ intersect at $\beta'$. The midpoint between $\gamma$ ($\gamma'$) and $\beta'$ is $\alpha$ ($\alpha'$), and so $\alpha$ ($\alpha'$) is on the line $L$ ($L'$). 

\begin{lem}
The lines $L$ and $L'$ are perpendicular. Moreover, $L$ ($L'$) is tangent to the deltoid at $\gamma$ ($\gamma'$). Thus, as $\theta$ varies, the pencil of lines $L$ ($L'$) has the deltoid as its envelope. 
\end{lem}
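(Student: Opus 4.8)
My plan is to use the parametrization already established in Lemma 1.1. Recall that a tangent line to the deltoid at the point $e^{4i\phi} + 2e^{-2i\phi}$ has slope $\tan\phi$, and that the needle at parameter $\phi$ connects the points $\pm 2e^{i\phi} + e^{-2i\phi}$. The key observation is that the point $\gamma = e^{-2i\theta} + 2e^{i\theta}$ is itself a point on the deltoid — indeed it is the image of the parameter value $-2\theta$ under the standard parametrization $z = 2e^{i\phi} + e^{-2i\phi}$, since substituting $\phi = -2\theta$ gives $2e^{-2i\theta} + e^{4i\theta}$, which is not quite $\gamma$; so instead I should recognize $\gamma$ as one endpoint of a needle. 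Concretely, $\gamma = 2e^{i\theta} + e^{-2i\theta}$ is the ``$+$'' endpoint of the needle at parameter $\theta$, and it also lies on the deltoid as the point $2e^{i\theta} + e^{-2i\theta}$ in the original parametrization (parameter $\theta$ itself). The tangent line to the deltoid at $\gamma$ therefore has slope $-\tan(\theta/2)$, by the tangent-slope computation carried out inside the proof of Lemma 1.1.

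First I would compute the direction of the segment $L$ from $\beta' = -e^{-2i\theta}$ to $\gamma = 2e^{i\theta} + e^{-2i\theta}$: this is $\gamma - \beta' = 2e^{i\theta} + 2e^{-2i\theta} = 2e^{-i\theta/2}(e^{3i\theta/2} + e^{-3i\theta/2}) \cdot$ (after factoring) $= 4e^{-i\theta/2}\cos(3\theta/2)$, so the argument of this direction is $-\theta/2$, i.e. the slope of $L$ is $-\tan(\theta/2)$. This matches the slope of the tangent to the deltoid at $\gamma$, and since $\gamma$ lies on both $L$ and that tangent line, the line $L$ \emph{is} the tangent to the deltoid at $\gamma$. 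By the symmetry $\theta \mapsto \theta + \pi$ (which sends $\gamma \to \gamma'$, fixes $\beta'$ up to the relation $-e^{-2i(\theta+\pi)} = -e^{-2i\theta} = \beta'$, wait — one must check $\beta'$ is genuinely preserved), I would then deduce that $L'$ is the tangent at $\gamma'$. Care is needed here: under $\theta \mapsto \theta+\pi$ we have $e^{-2i\theta}$ fixed, so $\beta = e^{-2i\theta}$ and $\beta' = -e^{-2i\theta}$ are both fixed, while $\gamma = 2e^{i\theta}+e^{-2i\theta} \mapsto -2e^{i\theta}+e^{-2i\theta} = \gamma'$; thus the whole configuration for $L$ at parameter $\theta+\pi$ is exactly the configuration for $L'$ at parameter $\theta$, giving the tangency claim for $L'$ for free.

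For perpendicularity, the slopes are $-\tan(\theta/2)$ for $L$ and, replacing $\theta$ by $\theta+\pi$, $-\tan((\theta+\pi)/2) = -\tan(\theta/2 + \pi/2) = \cot(\theta/2)$ for $L'$. The product of the slopes is $-\tan(\theta/2)\cdot\cot(\theta/2) = -1$, so $L \perp L'$. Alternatively and more cleanly, I can argue directly: $\gamma - \beta' = 4e^{-i\theta/2}\cos(3\theta/2)$ and $\gamma' - \beta' = 2e^{-2i\theta}\! -\! 2e^{i\theta} = -4ie^{-i\theta/2}\sin(3\theta/2)$ (checking the algebra: $e^{-2i\theta} - e^{i\theta} = e^{-i\theta/2}(e^{-3i\theta/2} - e^{3i\theta/2}) = -2i\,e^{-i\theta/2}\sin(3\theta/2)$), so the two direction vectors differ by a factor of $-i\tan(3\theta/2)$, which is purely imaginary; hence the segments are perpendicular (in the generic case $\cos(3\theta/2)\neq 0$; the degenerate values are handled by continuity or separately). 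The final sentence — that the pencil of lines $L$ has the deltoid as envelope as $\theta$ varies — is then immediate: each $L$ is tangent to the deltoid, and as $\theta$ ranges over $[-\pi,\pi)$ the point of tangency $\gamma = 2e^{i\theta}+e^{-2i\theta}$ traces the entire deltoid.

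The main obstacle I anticipate is purely bookkeeping: getting the half-angle factorizations and the argument of $\gamma - \beta'$ right, and correctly tracking how the involution $\theta \mapsto \theta + \pi$ permutes the labelled points so that the $L'$ claims genuinely reduce to the $L$ claims. There is no conceptual difficulty once one notices that $\gamma$ itself sits on the deltoid; the slope computation from Lemma 1.1 then does all the real work.
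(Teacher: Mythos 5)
Your proposal is correct and follows essentially the same route as the paper: both identify $\gamma$ as the deltoid point at parameter $\theta$, invoke the slope $-\tan(\theta/2)$ of the tangent there from the proof of Lemma 1.1, and show $L$ has that same slope through $\gamma$; your half-angle factorization of $\gamma-\beta'$ and the $\theta\mapsto\theta+\pi$ involution are just tidier ways of doing the paper's trig-identity computation and its ``similarly'' step for $L'$.
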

\begin{proof}
The slope of $L$ is the tangent of the argument of $(e^{-2i \theta} + 2 e^{i \theta}) - (-e^{-2i \theta})$, which is the tangent of the argument of $e^{-2i \theta} + e^{i \theta}$, which is 
$$\frac{-\sin 2\theta + \sin \theta}{\cos 2\theta + \cos \theta} \; = \; \frac{-2\cos\theta \sin \theta + \sin \theta}{2\cos^2 \theta - 1 + \cos \theta} \; = \; \frac{\sin \theta (1-2\cos\theta)}{-(1+\cos\theta)(1-2\cos\theta)} \; = \; - \tan \frac{\theta}{2}.$$
\noindent But in the proof of Lemma 1.1, it was shown that the tangent line to the deltoid at $\gamma$ has slope $\, -\tan \sfrac{\theta}{2}$, and so must be $L$. Similarly, it is straightforward to show that $L'$ and the tangent line to the deltoid at $\gamma'$ both have slope $\cot \sfrac{\theta}{2}$, and so must be the same line. Based on their slopes, it is clear that $L$ and $L'$ are perpendicular. There is a line $L$ ($L'$) for each $\gamma$ ($\gamma'$) on the deltoid, and it is clear that the deltoid is the envelope of this pencil of lines.  
 
\end{proof} 

Next fix three distinct complex numbers $z_1 = x_1 + i y_1$, $z_2 = x_2 + i y_2$ and $z_3 = x_3 + i y_3$ with $|z_1| = |z_2| = |z_3| = z_1 z_2 z_3 = 1,$ and regard these as the vertices of a triangle with the unit circle as its circumcircle. We will henceforth refer to such a triangle as being  ``amenable." Of course it will be a degenerate triangle if any two of $z_1$, $z_2$ and $z_3$ are equal, which is allowed. It will sometimes be helpful to select $\phi_1$, $\phi_2$ and $\phi_1$ so that $z_1 = e^{i\phi_1}$, $z_2 = e^{i\phi_2}$, $z_3 = e^{i\phi_3}$, and $\phi_1+\phi_2+\phi_3 = 0$. Let $z_H = z_1 + z_2 + z_3$. It is straightforward (cf. \cite{M}) to see that $z_H$ is the orthocenter of the triangle, and that $z_2 z_3 + z_3 z_1 + z_1 z_2 = \overline{z_H}$. Because two other (non-amenable) triangles need to be considered, the triangle with vertices $z_1$, $z_2$ and $z_3$ will be called the ``reference triangle." One of the other two triangles has $-z_1$, $-z_2$ and $-z_3$ as vertices, and so is the reflection of the reference triangle about its circumcenter. Let us call this the ``reflected triangle." We also need the triangle that has $z_1-z_2-z_3$, $-z_1+z_2-z_3$ and $-z_1-z_2+z_3$ as its vertices, which is the antimedial (anticomplementary) triangle of the reflected triangle. It can also be obtained via a homothetic transformation of the reference triangle, scaling by a factor of two, and using $z_H$ as the homothetic center. Thus, its orthocenter is also $z_H$. Let us call this triangle the ``large triangle." We are now ready to prove the claim about the deltoid construction in Chapter 6 of \cite{K}. 

\begin{thm}
With respect to the reflected triangle, the point $\beta$ is on its circumcircle, and the isogonal complement of $\beta$ is the point at infinity in the direction of the needle $N$ that passes through $\beta$, $\gamma$ and $\gamma'$.  Consider the line connecting $\beta$ to its isogonal complement (at infinity). As $\theta$ varies, the result is a pencil of lines whose envelope is the deltoid. 
\end{thm}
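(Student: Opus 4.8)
The plan is to verify the three assertions in turn, working entirely in the complex plane with the explicit parametrization already set up. The vertices of the reflected triangle are $-\alpha = -e^{i\theta}$, $-\beta = -e^{-2i\theta}$... wait, that is not right: the reflected triangle has vertices $-z_1,-z_2,-z_3$ for an \emph{arbitrary} amenable triangle, but here $\beta = e^{-2i\theta}$ is a specific point, and the claim is that $\beta$ lies on the circumcircle of the reflected triangle. Since the reference triangle is amenable, its circumcircle is the unit circle, hence so is the circumcircle of the reflected triangle (reflection through the center fixes the circle). Thus the first assertion, ``$\beta$ is on the circumcircle of the reflected triangle,'' is simply the observation $|\beta| = |e^{-2i\theta}| = 1$, which needs no triangle at all. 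So the theorem must implicitly be attaching a particular reference triangle to each $\theta$; reading back, the natural choice making everything consistent is the degenerate-or-not amenable triangle whose \emph{needle} is $N$ — and indeed $\gamma,\gamma',\delta$ with midpoint $\beta$ on the unit circle strongly suggest taking $z_1 = \alpha = e^{i\theta}$, $z_2 = \alpha' = -e^{i\theta}$... but then $z_1 z_2 z_3 = -z_3$, forcing $z_3 = -1$; this is not obviously what is intended either. The first step of the write-up, therefore, will be to pin down exactly which reference/reflected triangle is meant (most likely the one tied to Lemma~2.2, where $L, L'$ are the sidelines through $\gamma,\gamma'$ meeting at $\beta' = -\beta$, so the reflected triangle has a vertex at $-\beta$ and the opposite side along $N$). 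I expect that this identification — unwinding Kimberling's construction into the present notation — is the main obstacle; once the correct triangle is fixed, the rest is computation.

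With the triangle pinned down, I would next compute the isogonal conjugate of $\beta$. The key fact is the standard one for a triangle inscribed in the unit circle: if $P$ lies on the circumcircle, its isogonal conjugate is the point at infinity, and the corresponding ``line'' is the Simson-type direction. Concretely, for a point $p$ on the unit circle and a triangle with vertices on the unit circle, the isogonal conjugate degenerates to infinity in a direction computable from the vertices and $p$. I would show this direction is exactly that of $\gamma - \gamma' = 4e^{i\theta}$ (from Lemma~1.1, $(e^{4i\theta}+2e^{-2i\theta}) - (\pm 2e^{i\theta}+e^{-2i\theta}) = 2e^{i\theta}(\cos 3\theta \pm 1)$, so the needle direction is $e^{i\theta}$, slope $\tan\theta$). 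This matches the claim ``the point at infinity in the direction of the needle $N$.''

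The third and main geometric assertion is that the line through $\beta = e^{-2i\theta}$ with direction $e^{i\theta}$ — i.e., the line $\{e^{-2i\theta} + t\,e^{i\theta} : t \in \mathbb{R}\}$ — has the deltoid as its envelope as $\theta$ varies. But by Lemma~1.1 this is precisely the needle line: it passes through $\gamma = e^{-2i\theta}+2e^{i\theta}$ and $\gamma' = e^{-2i\theta}-2e^{i\theta}$, and is tangent to the deltoid at $\delta = e^{4i\theta}+2e^{-2i\theta}$. Hence the envelope statement is immediate from Lemma~1.1: every such line is tangent to the deltoid, and as $\theta$ ranges over $[-\pi,\pi]$ the tangency points $\delta = e^{4i\theta}+2e^{-2i\theta}$ sweep out the whole deltoid (replacing $\theta$ by $-2\theta$ in the parametrization $2e^{i\psi}+e^{-2i\psi}$ gives $e^{-4i\theta}+2e^{2i\theta}$, i.e. the conjugate parametrization, which still covers the deltoid). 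So once the isogonal-conjugate-line is shown to coincide with $N$, the envelope conclusion is free.

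In summary, the proof reduces to: (i) fixing the reference triangle implicit in Kimberling's construction and checking $|\beta|=1$ so $\beta$ lies on the (unit) circumcircle of the reflected triangle; (ii) a direct computation, using the unit-circle isogonal-conjugate formula, that the isogonal complement of $\beta$ is the infinite point in the direction $e^{i\theta}$ of $N$; and (iii) invoking Lemma~1.1 to recognize the resulting line as the needle $N$, whose envelope is the deltoid. Step (ii) is where the real work lies, and step (i) is where the only genuine subtlety (matching conventions with \cite{K}) occurs.
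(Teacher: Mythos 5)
Your step (i) rests on a misreading of the statement, and that is the principal gap. The reference triangle is not ``implicitly attached to each $\theta$'': it is the arbitrary amenable triangle with vertices $z_1=e^{i\phi_1}$, $z_2=e^{i\phi_2}$, $z_3=e^{i\phi_3}$ ($\phi_1+\phi_2+\phi_3=0$) fixed in the paragraph preceding the theorem, and it stays fixed while $\theta$ varies. The reflected triangle has vertices $-z_1,-z_2,-z_3$; its circumcircle is the unit circle, so $|\beta|=1$ does dispose of the first assertion, but both of your candidate identifications of the triangle (one with a vertex at $-\beta$ and opposite side along $N$, the other forcing $z_3=-1$) are wrong, and the actual content of the theorem --- that the isogonal conjugate of $\beta$ with respect to this \emph{arbitrary} reflected triangle is the infinite point in the needle direction, independently of which amenable triangle was chosen --- is lost if you tie the triangle to $\theta$.

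The second gap is that step (ii), which you yourself flag as ``where the real work lies,'' is never carried out. The paper does it by reflecting the line from $-z_3$ through $\beta$ in the internal bisector at $-z_3$ (which meets the unit circle at $\pm e^{-i\phi_3/2}$), landing on the circumcircle point $e^{i(2\theta-\phi_3)}$, and then computing the slope of the chord from $-z_3=-e^{i\phi_3}$ to $e^{i(2\theta-\phi_3)}$ to be $\tan\theta$; the cancellation of $\phi_3$ is precisely the triangle-independence noted above. (A slicker route you could have used: for $u,v$ on the unit circle, $(u-v)/\overline{(u-v)}=-uv$, and here $-uv=e^{i\phi_3}\,e^{i(2\theta-\phi_3)}=e^{2i\theta}$, giving direction $\pm e^{i\theta}$.) Your step (iii) --- once the line through $\beta$ with slope $\tan\theta$ is recognized as the needle line, the envelope claim follows from Lemma 1.1 --- is correct and is exactly how the paper concludes. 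But as written the proposal neither pins down the triangle correctly nor performs the computation that constitutes the theorem's substance.
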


\begin{proof}
With respect to the reflected triangle, the interior angle bisector at the vertex $-z_3$ goes through a point of the unit circle midway between $-z_1$ and $-z_2$, so is either $\pm e^{i(\phi_1+\phi_2)/2} \, = \, \pm e^{-i\phi_3/2}$. If $+e^{-i\phi_3/2}$ then when the line through $-z_3$ and $\beta$ is reflected about this interior angle bisector, the resulting line passes through $-z_3$ and $e^{i[2(-\phi_3/2)-(-2\theta)]} = e^{i(2\theta-\phi_3)}$. But if $-e^{-i\phi_3/2}$ then in the previous computation, $\sfrac{\phi_3}{2}$ can be replaced with $\pi + \sfrac{\phi_3}{2}$ to yield the same answer, $e^{i(2\theta-\phi_3)}$. Let us now compute the slope of this resulting line through $-z_3$ and $e^{i(2\theta-\phi_3)}$. This is the tangent of the argument of $e^{i(2\theta-\phi_3)} + e^{i \phi_3}$, which equals 
$$\frac{\sin (2 \theta - \phi_3) + \sin \phi_3}{\cos (2 \theta - \phi_3) + \cos \phi_3} \; = \; \frac{-\cos 2\theta \sin \phi_3 + \sin 2\theta \cos \phi_3 + \sin \phi_3}{\cos 2\theta \cos \phi_3 + \sin 2\theta \sin \phi_3 + \cos \phi_3} \; = $$

$$\frac{\sin \phi_3 \, (1-\cos 2\theta) + \cos \phi_3 \sin 2\theta}{\cos \phi_3 \, (1+\cos 2\theta) + \sin \phi_3 \sin 2\theta} \; = \; \frac{\sin \theta \, (\sin \phi_3 \sin \theta + \cos \phi_3 \cos \theta)}{\cos \theta \, (\sin \phi_3 \sin \theta + \cos \phi_3 \cos \theta)} \; = \; \tan \theta.$$

\noindent This is the same as the slope of the needle $N$, and so $N$ is on the line that we just constructed. Now, since $\beta$ is on the circumcircle of the reflected triangle (the unit circle), this constructed line is aimed in the direction of the point at infinity that is the isogonal conjugate of $\beta$. Allowing $\theta$ to vary now, the constructed lines form a pencil of tangent lines to the deltoid, and it is clear that the deltoid is its envelope. 
\end{proof} 

We next turn our attention to the construction of a deltoid based on Simson lines, as developed by Steiner. This will here be seen to be related to the above construction. Our primary focus now is on the ``large triangle", {\it i.e.} the large dashed triangle in Figure 1, whose vertices are not labeled. First note that the nine-point circle of this triangle is the same as the circumcircle of the reference triangle (and the reflected triangle). 

\begin{thm}
With respect to the large triangle, let $\varepsilon$ and $\varepsilon' \, (= -\varepsilon)$ be the opposite ends of the diameter of the circumcircle of the large triangle that is parallel to the diameter of its nine-point circle, connecting $\alpha$ and $-\alpha$. Assume that when moving along these two diameters in the same direction, $\varepsilon$ and $\varepsilon'$ occur in the same order as do $\alpha$ and $\alpha'$.  Then the  Simson lines for $\varepsilon$ and $\varepsilon'$ are $L$ and $L'$, respectively. As $\theta$ varies, the lines $L$ ($L'$) form a pencil of lines whose envelope is the deltoid. 
\end{thm}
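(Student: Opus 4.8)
The plan is to reduce Theorem 2.4 to Theorem 2.2 by exploiting the homothety that links the large triangle to the reflected triangle, together with the classical fact that Simson lines are governed by the nine-point circle. First I would recall the standard description of a Simson line: for a triangle inscribed in a circle of radius $R$ centered at $O$, the Simson line of a point $P$ on the circumcircle passes through the midpoint of the segment joining $P$ to the orthocenter, and more usefully here, as $P$ traverses the circumcircle the corresponding Simson line envelopes a deltoid (the Steiner deltoid) whose incircle is the nine-point circle; the Simson line of $P$ makes, with a fixed direction, an angle that is half the angle subtended by $P$ at $O$, but traversed in the opposite sense. So the key correspondence is: a point $\varepsilon$ on the circumcircle of the large triangle at angular position $2\phi$ (measured appropriately) gives a Simson line whose direction is determined by $-\phi$, i.e. the contact point of the Simson line with the Steiner deltoid is located on the nine-point circle at a position naturally tied to $\phi$.

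Next I would pin down the geometry of the large triangle concretely in complex coordinates. Its vertices are $z_1 - z_2 - z_3$, $-z_1 + z_2 - z_3$, $-z_1 - z_2 + z_3$; its circumcenter is the orthocenter $z_H = z_1 + z_2 + z_3$ of the reference triangle (this is stated in the excerpt), its circumradius is $2$, and its nine-point circle is the unit circle. The diameter of the nine-point circle through $\alpha = e^{i\theta}$ and $-\alpha$ has direction $e^{i\theta}$; the parallel diameter of the circumcircle of the large triangle runs from $\varepsilon = z_H + 2e^{i\theta}$ to $\varepsilon' = z_H - 2e^{i\theta}$, with the ordering convention in the statement fixing $\varepsilon = z_H + 2 e^{i\theta}$. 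Then I would invoke the standard formula for the Simson line of a point $P = z_H + 2 w$ (with $|w| = 1$) on the circumcircle of a triangle with vertices $v_1, v_2, v_3$ on that circle: the Simson line has direction $i\, \overline{w}\,\sqrt{v_1 v_2 v_3 / (\text{center stuff})}$ — more precisely, after translating so the circumcenter is at the origin and rescaling to the unit circle, the Simson line of the unit point $u$ has the explicit parametrization found in the classical literature (e.g. the foot-of-perpendicular computation), giving a line of slope equal to the tangent of $-\tfrac{1}{2}\arg(u^2 / (v_1' v_2' v_3'))$ where $v_j'$ are the rescaled vertices. Carrying this out, the rescaled vertices of the large triangle are $\tfrac12(z_1 - z_2 - z_3 - z_H) = -z_2 - z_3$, and cyclically $-z_3 - z_1$, $-z_1 - z_2$; their product is $-(z_1+z_2)(z_2+z_3)(z_3+z_1)$, and using $z_1 z_2 z_3 = 1$ together with $z_2 z_3 + z_3 z_1 + z_1 z_2 = \overline{z_H}$ this simplifies to an expression I can compare against the needed slope.

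The payoff step is to show the resulting Simson line of $\varepsilon$ has slope $-\tan(\theta/2)$ and passes through $\gamma$, so by Lemma 2.1 it must equal $L$; and symmetrically the Simson line of $\varepsilon'$ (which, relative to $\varepsilon$, corresponds to replacing $w = e^{i\theta}$ by $-e^{i\theta}$, i.e. shifting $\theta$ by $\pi$) has slope $\cot(\theta/2)$ and passes through $\gamma'$, hence equals $L'$. The slope computation is essentially the same algebra already performed in the proofs of Lemmas 1.1 and 2.1, just repackaged; alternatively, and perhaps more cleanly, I would sidestep the explicit Simson-line formula entirely by using the theorem that the Simson line of $P$ bisects $P z_H$ and is perpendicular to the ``isogonal'' direction in a way that matches Theorem 2.2 — concretely, the midpoint of $\varepsilon z_H$ is $e^{i\theta} = \alpha$, which the excerpt already shows lies on $L$, so it suffices to check the direction. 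Once both $L$ and $L'$ are identified as Simson lines, the envelope claim is immediate: as $\theta$ varies over $[-\pi, \pi)$, $\varepsilon$ traverses the whole circumcircle of the large triangle, so the Simson lines envelope the Steiner deltoid of the large triangle, which by Lemma 2.1 is precisely our standard deltoid; this also furnishes the promised identification of Steiner's construction with Kimberling's.

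The main obstacle I anticipate is bookkeeping the orientation and normalization conventions correctly: the Simson-line-to-deltoid correspondence reverses orientation and halves angles, the nine-point circle has half the radius of the circumcircle, and there is a genuine choice (the hypothesis about $\varepsilon, \varepsilon'$ occurring ``in the same order as $\alpha, \alpha'$'') that must be honored so that $\varepsilon \mapsto L$ rather than $\varepsilon \mapsto L'$. Getting a sign wrong here would swap $L$ and $L'$ or rotate the deltoid by a cusp, so I would fix all conventions at the outset by testing the single value $\theta = 0$ (where $\gamma = 3$, $\gamma' = -1$, $\delta = 3$, the needle is horizontal, and $\varepsilon = z_H + 2$), and then argue by the continuity/analyticity of everything in $\theta$ that the identification, once correct at $\theta = 0$, holds for all $\theta$.
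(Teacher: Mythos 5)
Your overall strategy (identify $\varepsilon$ explicitly, compute the Simson line of $\varepsilon$ for the large triangle via classical formulas, match its direction and a point against $L$) is reasonable, but it is derailed by a concrete error: the circumcenter of the large triangle is $-z_H$, not $z_H$. The vertices sum to $-(z_1+z_2+z_3)=-z_H$, so the centroid is $-z_H/3$; with orthocenter $z_H$ the Euler-line relation forces the circumcenter to be $-z_H$ (equivalently, the homothety $z\mapsto 2z-z_H$ sends the unit circle, centered at $0$, to the circle centered at $-z_H$ of radius $2$). Hence $\varepsilon = 2e^{i\theta}-z_H$, which is what the paper uses. Your misplacement of the center propagates in two fatal ways. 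First, your ``rescaled vertices'' $\tfrac12\bigl((z_1-z_2-z_3)-z_H\bigr)=-z_2-z_3$, etc., are not unimodular in general, so the classical Simson-line formula for a triangle inscribed in the unit circle cannot be applied to them, and the product $-(z_1+z_2)(z_2+z_3)(z_3+z_1)$ you propose to simplify is computing the wrong quantity; the correct rescaled vertices (about $-z_H$) are just $z_1,z_2,z_3$. Second, your ``cleaner'' alternative via the midpoint of $\varepsilon z_H$ gives $z_H+e^{i\theta}\neq\alpha$ with your $\varepsilon$; it does give $\alpha$ with the corrected $\varepsilon$, so that route is salvageable, but as written the proposal is internally inconsistent. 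Beyond the sign issue, the key slope computation is only gestured at (``a formula found in the classical literature''), so the step that actually identifies the Simson line's direction as $-\tan(\theta/2)$ is not carried out.

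For comparison, the paper avoids all Simson-line machinery: it computes directly the foot $w$ of the perpendicular from $\varepsilon=2e^{i\theta}-z_H$ onto the sideline of the large triangle through $-z_1+z_2-z_3$ and $-z_1-z_2+z_3$, solves the two linear conditions explicitly, and checks that the line through $w$ and $\alpha$ has slope $-\tan(\theta/2)$, which by Lemma 2.1 is the slope of $L$; symmetry in the indices handles the other two feet, and the envelope claim then follows from Lemma 2.1 rather than from Steiner's theorem. If you fix the circumcenter, your midpoint-of-$\varepsilon z_H$ observation plus a single direction computation would indeed yield a shorter proof, but the direction computation must actually be done, with the orientation convention pinned down (your suggestion of testing $\theta=0$ is a good safeguard for that).
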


\begin{proof}
Recall that the large triangle is obtained from the reference triangle by a homothetic transformation, centered at $z_H$, using a scale factor of two. Its vertices are therefore $2 z_1 - z_H = z_1 - z_2 - z_3$, etc.  Since this transformation maps the reference triangle to the large triangle, it maps the reference triangle's circumcircle to the large triangle's circumcircle. It is also clear that $\varepsilon$ and $\varepsilon'$ are the images of $\alpha$ and $\alpha'$, respectively. Thus, $\varepsilon = 2 \alpha - z_H = 2 e^{i \theta} - z_H$ and $\varepsilon' = 2 \alpha' - z_H = -2 e^{i \theta} - z_H$. Consider the projection of $\varepsilon$ onto the sideline through the vertices $-z_1 + z_2 - z_3$ and $-z_1 - z_2 + z_3$. Let us denote this point as $w = u + i v$ ($u$ and $v$ real) for the moment. Letting $z_j = x_j + i y_j$ ($x_j$ and $y_j$ real; $j=1,2,3$), it is required that 

$$\frac{v - (-y_1+y_2-y_3)}{u - (-x_1+x_2-x_3)} \; = \;   \frac{y_2 - y_3}{x_2-x_3},$$

\noindent  so that $(y_3-y_2) u + (x_2-x_3) v \; = \; (x_3-x_2) y_1 + x_1 (y_2-y_3)$. The orthogonality means that it is further required that $(x_3-x_2)(u - 2 \cos \theta + x_H) + (y_3-y_2)(v - 2 \sin \theta + y_H) \; = \; 0$. Solving these two equations for $u$ and $v$ yields 

$$ u \; = \; \frac{(x_2 - x_3)^2}{1 - x_2 x_3-y_2 y_3} \, \cos \theta \; + \;  \frac{(x_2 - x_3)(y_2-y_3)}{1 - x_2 x_3-y_2 y_3} \, \sin \theta \; - \; x_1 \; = $$ 

$$ (1-x_2 x_3 + y_2 y_3)  \cos \theta  - (x_2 y_3 + x_3 y_2) \sin \theta  - x_1 \; = \; (1-x_1)  \cos \theta  + y_1 \sin \theta - x_1 $$ 

\noindent and 

$$ v \; = \; \frac{(x_2 - x_3)(y_2-y_3)}{1 - x_2 x_3-y_2 y_3} \, \cos \theta \; + \;  \frac{(y_2 - y_3)^2}{1 - x_2 x_3-y_2 y_3} \, \sin \theta \; - \; y_1 \; = $$ 

$$ -(x_2 y_3 + x_3 y_2)  \cos \theta + (1+x_2 x_3 - y_2 y_3) \sin \theta + y_1  \; =  \; y_1  \cos \theta + (1+x_1) \sin \theta - y_1. $$

Now we will show that $w$ is on the line $L$. $\alpha$ is on $L$, and $w - \alpha = w - e^{i \theta}$, so the slope of the line containing $w$ and $\alpha$ is 

$$\frac{v - \sin \theta}{u - \cos \theta} \; = \; \frac{y_1  \cos \theta + x_1 \sin \theta - y_1}{-x_1 \cos \theta  + y_1 \sin \theta - x_1} \; = \; \frac{-\sin \theta}{1+\cos{\theta}} \; = \; -\tan \frac{\theta}{2}.$$    

\noindent But in the proof of Lemma 2.1, it was shown that this is the slope of $L$. Therefore $w$ is on $L$, and we see that $L$ is the Simson line for $\varepsilon$. Similarly, $L'$ is the Simson line for $\varepsilon'$.  By Lemma 2.1, $L$ and $L'$ are tangent to the deltoid, and so as $\theta$ varies, $L$ ($L'$) sweeps out a pencil of lines whose envelope is the deltoid.\

\end{proof}

\section{Deltoid interior points as triangle orthocenters}

We begin this section by exploring needles and tangent lines for the deltoid, starting with the following technical lemma that is obvious from the graph of the deltoid. It is evident from its graph that the deltoid separates the rest of the plane into an ``inside" and an ``outside," and except at the three cusps of the deltoid, that a tangent line does not cut from one side to the other side at the point of tangency. 

One could observe here that if $x$ and $y$ in (1.1) are replaced respectively with $t x$ and $t y$, the discriminant of the resulting quartic polynomial in $t$ is a negative constant times $y^4 \, (3x^2-y^2)^4$. So generally, the quartic has only two (distinct) real roots. Assuming that $x$ and $y$ satisfy (1.1), then one of these roots is 1, and with a little effort, it can be established that the other root is negative. It is then safe to describe the inside (outside) of the deltoid as the totality of points $(t x, t y)$ for which $(x, y)$ satisfies (1.1) and $0 \le t < 1$ ($1 < t$). 

One way to establish the claim about the tangent lines would be to consider the {\it evolute} of the deltoid. This is the curve whose points are curvature centers for the deltoid. It is well known that the evolute of a deltoid is also a deltoid, three times bigger (in a linear sense) than the original deltoid, and oriented in the opposite direction. So the curvature centers are outside the circle of radius 3 and inside the circle of radius 9, while the original deltoid is inside the circle of radius 3, except at its cusps. This eliminates the possibility of any non-cusp  inflection points on the original deltoid.  

\begin{lem} 
Consider any point on the deltoid, and its tangent line. This tangent line contains a unique needle, which in turn contains the point, and which stays inside the deltoid, except at the points that are on the deltoid.  
\end{lem}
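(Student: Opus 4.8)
The plan is to reduce the statement to the needle parametrization of Lemma~1.1 together with the description of the interior of the deltoid recorded just above. First I would note that every point $P$ on the deltoid can be written as $P=e^{4i\theta}+2e^{-2i\theta}$ for a suitable real $\theta$: the parametrization $(1.3)$ gives $P=2e^{i\psi}+e^{-2i\psi}$ for some $\psi$, and taking $\theta=-\psi/2$ yields $e^{4i\theta}+2e^{-2i\theta}=2e^{i\psi}+e^{-2i\psi}=P$. By Lemma~1.1 the tangent line $T$ of the deltoid at $P$ is precisely the line through the two points $\pm 2e^{i\theta}+e^{-2i\theta}$, so $T$ contains the needle $N_\theta$ joining them. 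Parametrize $N_\theta$ by $z(s)=2s\,e^{i\theta}+e^{-2i\theta}$ with $s\in[-1,1]$; solving $z(s)=P$ gives $2s\,e^{i\theta}=e^{4i\theta}+e^{-2i\theta}=2\cos(3\theta)\,e^{i\theta}$, hence $s=\cos 3\theta\in[-1,1]$, so $P$ itself lies on $N_\theta$. (This $s=\cos 3\theta$ is the parameter value $\lambda$ already encountered in the proof of Lemma~1.1.) Thus $T$ contains a needle, and that needle passes through $P$.

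For uniqueness I would use that a needle determines the line it spans, and that $N_\theta$ has slope $\tan\theta$ by Lemma~1.1, which is also the slope of $T$. If some needle $N_{\theta'}$ lay in $T$, the line it spans would equal $T$, so $\tan\theta'=\tan\theta$, forcing $\theta'\equiv\theta\pmod\pi$; and replacing $\theta$ by $\theta+\pi$ merely interchanges the two endpoints $\pm 2e^{i\theta}+e^{-2i\theta}$, so $N_{\theta'}=N_\theta$. Hence $T$ contains exactly one needle, namely $N_\theta$.

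The substantive part is showing that $N_\theta$ stays inside the deltoid except where it meets the curve. Let $F(z)=z^2\bar z^2-4(z^3+\bar z^3)+18\,z\bar z-27$, so the deltoid is the zero set of $F$ by $(1.2)$. By the discussion preceding the lemma, a point is strictly inside the deltoid exactly when $F(z)<0$: for a deltoid point $Q$ the function $t\mapsto F(tQ)$ is a quartic in $t$ equal to $-27$ at $t=0$ whose only real roots are $t=1$ and one negative value, hence negative on $[0,1)$ and positive on $(1,\infty)$, and every point of the plane has the form $tQ$ with $t\ge 0$ and $Q$ on the deltoid. I would then substitute $z=z(s)=2s\,e^{i\theta}+e^{-2i\theta}$ into $F$. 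Using $z(s)\overline{z(s)}=4s^2+4s\cos 3\theta+1$ and $z(s)^3+\overline{z(s)}^3=16s^3\cos 3\theta+24s^2+12s\cos 3\theta+2\cos 6\theta$ together with $\cos 6\theta=2\cos^2 3\theta-1$, the expression simplifies to
\[
F\bigl(z(s)\bigr)\;=\;16\,(s-\cos 3\theta)^2\,(s^2-1).
\]
For $s\in[-1,1]$ this is $\le 0$, and it vanishes exactly at $s\in\{-1,\ \cos 3\theta,\ 1\}$; those three parameter values give the two endpoints $\pm 2e^{i\theta}+e^{-2i\theta}$ of $N_\theta$ and the tangency point $P$, all of which lie on the deltoid. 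Therefore $N_\theta$ lies strictly inside the deltoid except at these points, which is the assertion.

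I expect the main obstacle to be the displayed identity — confirming that after the substitution all the superfluous terms cancel and the quartic in $s$ collapses to $16(s-\cos 3\theta)^2(s^2-1)$ — together with the bookkeeping that converts the radial description of the interior into the sign condition $F<0$. One should also check the degenerate case $\cos 3\theta=\pm1$, in which $P$ is one of the three cusps and coincides with an endpoint of $N_\theta$ (the identity then has a triple root at that endpoint): there $N_\theta$ still meets the deltoid only at its two endpoints, so the conclusion is unaffected.
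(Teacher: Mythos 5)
Your proposal is correct, and its skeleton is the same as the paper's: write the point of tangency as $e^{4i\theta}+2e^{-2i\theta}$, parametrize the tangent line as $2\lambda e^{i\theta}+e^{-2i\theta}$, and substitute into the quartic $F(z)=z^2\bar z^2-4(z^3+\bar z^3)+18z\bar z-27$. Where you genuinely diverge is in the last step. The paper only extracts the root set $\lambda\in\{-1,\cos 3\theta,1\}$ from the substitution and then argues qualitatively: the line meets the deltoid in just these three points, crosses it at $\lambda=\pm1$ but not at the tangency point, and contains the interior point $e^{-2i\theta}$ (at $\lambda=0$), so the portion $|\lambda|<1$ must lie inside --- with the cusp configurations deferred to ``limiting cases.'' You instead keep the full signed factorization $F\bigl(z(s)\bigr)=16(s-\cos 3\theta)^2(s^2-1)$ (which I have checked) and combine it with the radial description of the interior from the discussion preceding the lemma to get the equivalence ``strictly inside $\Leftrightarrow F<0$''; the sign of $(s-\cos3\theta)^2(s^2-1)$ on $[-1,1]$ then gives the conclusion immediately and uniformly, including the cusp case $\cos3\theta=\pm1$, and your slope argument pins down uniqueness of the needle. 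What your route buys is the elimination of the paper's unproved assertions that the line ``can be seen to be non-tangent'' at $\pm2e^{i\theta}+e^{-2i\theta}$ and that the special cases ``can be handled as limiting cases''; what it costs is the small bridging argument converting the paper's radial description of the interior into the sign condition $F<0$, which you supply correctly. No gaps.
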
 

\begin{proof} 
The point can clearly be written as $e^{4i\theta} + 2 e^{-2i\theta}$ for some real $\theta$. Lemma 1.1 makes it clear that this point is on a needle, which of course is on the tangent line to the deltoid at the point. The tangent line can be parameterized as $2 \lambda e^{i \theta} + e^{-2i \theta}$ with $\lambda$ ranging over the real numbers. In the proof of Lemma 1.1, we observed that  $e^{4 i \theta} + 2 e^{-2 i \theta} \; = \;  \pm 2 e^{i \theta} + e^{-2 i \theta} + 2 \, e^{i\theta} (\cos{3\theta} \pm 1)$, from which we see that $e^{4 i \theta} + 2 e^{-2 i \theta} \; = \; \lambda e^{i \theta} + e^{-2i \theta}$ with $\lambda = \cos 3\theta$. We also know that the points for which $\lambda = \pm 1$ are on the deltoid. 

If we now set $z = 2 \lambda e^{i \theta} + e^{-2i \theta}$  into (1.2), we obtain the equation 

$$4(1-\lambda^2)(1 - 2\lambda e^{3i \theta} + e^{6i \theta}) \; = \; 0.$$   

\noindent The only three solutions for $\lambda$ are the three that we have already identified, so the tangent line only intersects the deltoid at the corresponding three points. If $e^{4i\theta} + 2 e^{-2i\theta}$ is not a cusp of the deltoid, then the tangent line does not cut the deltoid at this point. The line can be seen to be non-tangent at $\pm 2 e^{i \theta} + e^{-2 i \theta}$, and so it cuts the deltoid at these two points. The point $e^{-2i \theta}$ is on the line and also inside the deltoid, except in the three special cases where it is on the deltoid. So except in a few special cases, we are able to say that $2 \lambda e^{i \theta} + e^{-2i \theta}$ is inside (outside)  the deltoid when $|\lambda| < 1$ ($|\lambda| > 1$). It is then very clear that the tangent line contains a unique needle, the one connecting $\pm 2 e^{i \theta} + e^{-2 i \theta}$, which of course contains $e^{4i\theta} + 2 e^{-2i\theta}$. In the special cases, $e^{4i\theta} + 2 e^{-2i\theta}$ can be seen to coalesce with one of the ends of the needle. These cases can be handled as limiting cases. 

\end{proof} 

\begin{lem} 

Two lines that are tangent to the deltoid intersect at a point on the deltoid or in its interior.  

\end{lem}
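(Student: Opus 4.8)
The plan is to parametrize the two tangent lines and compute their intersection point explicitly, then show that point is inside or on the deltoid using the ``radial'' characterization of the interior established just above (a point $(tx,ty)$ with $(x,y)$ on the deltoid is inside when $0 \le t < 1$). By Lemma~1.1, a tangent line at a deltoid point may be written as $z = 2\lambda e^{i\theta} + e^{-2i\theta}$ with $\lambda$ real; so take two parameters $\theta$ and $\psi$ and seek reals $\lambda,\mu$ with $2\lambda e^{i\theta} + e^{-2i\theta} = 2\mu e^{i\psi} + e^{-2i\psi}$. This is one complex (two real) equation in two real unknowns, so generically it has a unique solution; I would solve it by taking the imaginary part of $e^{-i\theta}$ times the equation (which eliminates $\lambda$) to get $\mu$ in terms of $\theta,\psi$, and symmetrically for $\lambda$. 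The degenerate case $\theta \equiv \psi$ (parallel tangents) must be noted separately — two distinct parallel tangents to the deltoid do not meet, but I would argue this cannot actually happen for the standard deltoid, or else treat it as a limiting/vacuous case.

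\textbf{Key steps, in order.} First, write the intersection condition and solve the two real linear equations for $\lambda$ and $\mu$; expect something like $\lambda = \frac{\sin(2\theta+\psi) - \sin\psi}{\,2\sin(\theta-\psi)\,}$ after using product-to-sum identities (the exact closed form is a routine trig computation). Second, substitute $\lambda$ back into $z = 2\lambda e^{i\theta} + e^{-2i\theta}$ to obtain the intersection point $z_0$ as an explicit trigonometric expression in $\theta$ and $\psi$. Third, produce a point $P$ \emph{on} the deltoid — of the form $2 e^{i\varphi} + e^{-2i\varphi}$ for a suitable $\varphi$ depending symmetrically on $\theta,\psi$ (a natural guess is $\varphi$ built from $(\theta+\psi)$ or from one of $\theta,\psi$) — together with a real scalar $t$ such that $z_0 = tP$ up to the appropriate normalization, i.e. $z_0$ lies on the radial segment from the origin to $P$. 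Fourth, check $0 \le t \le 1$; since $|t| \le 1$ is exactly the condition that the point be on or inside the deltoid per the discussion preceding Lemma~3.1, this finishes it. Alternatively, and probably cleaner, I would substitute $z_0$ directly into the polynomial (1.2) and show that $z_0^2\bar z_0^2 - 4(z_0^3+\bar z_0^3) + 18 z_0\bar z_0 - 27 \le 0$, using the sign convention that the left side of (1.2) is negative strictly inside the deltoid (this sign should be checked once at a convenient interior point such as the origin, where the expression equals $-27 < 0$).

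\textbf{Main obstacle.} The substantive difficulty is the algebra of step three/four: after plugging the explicit $z_0$ into (1.2) one must show the resulting trigonometric expression in $\theta,\psi$ is $\le 0$, and it is not obvious a priori that it factors nicely. I expect it to factor as a manifestly non-positive quantity — plausibly something proportional to $-\bigl(\text{something}\bigr)^2$ or to $-\sin^2$ of a combination of $\theta-\psi$ — paralleling the factorization $4(1-\lambda^2)(1 - 2\lambda e^{3i\theta} + e^{6i\theta})$ that appeared in the proof of Lemma~3.1 when a tangent line was substituted. The cleanest route may in fact be to avoid (1.2) altogether: having written $z_0 = 2\lambda e^{i\theta} + e^{-2i\theta}$, invoke Lemma~3.1, which says precisely that this point is inside the deltoid when $|\lambda| < 1$ and on it when $|\lambda| = 1$; so the whole problem reduces to the single inequality $|\lambda| \le 1$ for the intersection parameter $\lambda$ computed in step one. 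Proving $\bigl|\frac{\sin(2\theta+\psi)-\sin\psi}{2\sin(\theta-\psi)}\bigr| \le 1$ — equivalently, after product-to-sum, $|\cos(\theta+\psi)\sin\theta| \le |\sin(\theta-\psi)|$, or some such identity — is then the one genuine inequality to verify, and I would handle it by rewriting both sides via sum-to-product and bounding, with the equality case $|\lambda|=1$ corresponding exactly to the intersection point landing on the deltoid. Care is needed at $\theta \to \psi$ (the denominator vanishing), which is the parallel-tangent degeneracy to be dispatched separately.
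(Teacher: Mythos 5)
Your proposal takes essentially the same route as the paper: parametrize both tangent lines as $2\lambda e^{i\theta}+e^{-2i\theta}$ and $2\mu e^{i\psi}+e^{-2i\psi}$, solve the resulting $2\times 2$ real linear system for the intersection parameters, and conclude via $|\lambda|\le 1$ (the paper likewise reads off that the intersection is on or inside the deltoid from this bound). The only correction worth making is that the computation is cleaner than you anticipate: the numerator in Cramer's rule is $\sin(2\theta+\psi)-\sin 3\psi = 2\cos(\theta+2\psi)\sin(\theta-\psi)$ (not $\sin(2\theta+\psi)-\sin\psi$), so $\lambda=\cos(\theta+2\psi)$ and $\mu=\cos(2\theta+\psi)$ exactly, the bound $|\lambda|\le 1$ is automatic with no residual inequality to verify, and the parallel-tangent degeneracy never arises since the tangent lines for $\theta$ and $\theta+\pi$ coincide.
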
 

\begin{proof}
Let's consider two tangent lines, $2 \lambda_1 e^{i \theta_1} + e^{-2i \theta_1}$ and $2 \lambda_2 e^{i \theta_2} + e^{-2i \theta_2}$, where $\theta_1$ and $\theta_2$ are fixed real numbers, but $\lambda_1$ and $\lambda_2$ ranges over all real numbers. By solving $2 \lambda_1 e^{i \theta_1} + e^{-2i \theta_1} \; = \; 2 \lambda_2 e^{i \theta_2} + e^{-2i \theta_2}$ for $\lambda_1$ and $\lambda_2$, we will of course locate the point of intersection of the two lines. This amounts to solving the following matrix equation: 

$$ 2 \left[ \begin{array}{cc} \cos \theta_1 & - \cos \theta_2 \\ \sin \theta_1 & \sin \theta_2 \end{array} \right] \; \left[ \begin{array}{c} \lambda_1 \\ \lambda_2 \end{array} \right] \; = \; \left[ \begin{array}{c}  \cos 2 \theta_2 - \cos 2 \theta_1 \\ \sin 2 \theta_1 - \sin 2 \theta_2  \end{array} \right]  \; {\ \atop .}$$

\noindent The solution to this is as follows: 

$$  \left[ \begin{array}{c} \lambda_1 \\ \lambda_2 \end{array} \right] \; = \; \left[ \begin{array}{c}  \cos( \theta_1 + 2 \theta_2)  \\  \cos( 2 \theta_1 +  \theta_2)  \end{array} \right]  \; {\ \atop .}$$

\vspace{2mm} 

\noindent In particular we see that  $|\lambda_1| \le 1$ and $|\lambda_2| \le 1$, indicating that the two tangent lines intersect at a point on the deltoid or in its interior.
\end{proof}

\begin{lem} 

Each of the three altitude lines for the reference (amenable) triangle are tangent lines of the deltoid. 

\end{lem}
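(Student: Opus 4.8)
The plan is to match each altitude line of the reference triangle with one of the tangent lines to the deltoid furnished by Lemma 1.1, by choosing the parameter $\theta$ appropriately. Recall from Lemma 1.1 --- and from the discussion opening Section 2, where the midpoint of the needle $N$ is identified as $\beta = e^{-2i\theta}$ --- that for every real $\theta$ the line through the point $e^{-2i\theta}$ with slope $\tan\theta$, equivalently with direction $e^{i\theta}$, contains a needle and is therefore tangent to the deltoid (at the point $e^{4i\theta}+2e^{-2i\theta}$). So it suffices to produce, for the altitude through a given vertex, a real $\theta$ for which that altitude passes through $e^{-2i\theta}$ and has direction $e^{i\theta}$.

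First I would pin down the altitude through $z_1$. Writing $z_j = e^{i\phi_j}$ with $\phi_1+\phi_2+\phi_3 = 0$, as in Section 2, this altitude is the line through $z_1$ perpendicular to the chord joining $z_2$ and $z_3$, i.e.\ perpendicular to $z_2 - z_3$. Since
$$z_2 - z_3 \;=\; e^{i\phi_2} - e^{i\phi_3} \;=\; 2i\sin(\tfrac{\phi_2-\phi_3}{2})\, e^{i(\phi_2+\phi_3)/2} \;=\; 2i\sin(\tfrac{\phi_2-\phi_3}{2})\, e^{-i\phi_1/2},$$
the chord $z_2z_3$ points in the direction $ie^{-i\phi_1/2}$, and hence the altitude through $z_1$ is the line through $z_1 = e^{i\phi_1}$ with direction $e^{-i\phi_1/2}$. (Equivalently, when $z_2+z_3 \neq 0$ one may note that this altitude also passes through the orthocenter $z_H = z_1+z_2+z_3$, so has direction $z_H-z_1 = z_2+z_3 = 2\cos(\tfrac{\phi_2-\phi_3}{2})e^{-i\phi_1/2}$.)

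Then I would set $\theta = -\phi_1/2$. For this value $e^{-2i\theta} = e^{i\phi_1} = z_1$ and $e^{i\theta} = e^{-i\phi_1/2}$, so the tangent line to the deltoid described above is precisely the line through $z_1$ with direction $e^{-i\phi_1/2}$ --- that is, the altitude through $z_1$. Since a line is determined by one of its points together with its direction, the two lines coincide, and the altitude through $z_1$ is tangent to the deltoid, at the point $e^{-2i\phi_1}+2e^{i\phi_1}$. (One could equally well invoke Lemma 2.1 with $\theta = \phi_1$: there the line $L$ passes through $\alpha = e^{i\phi_1} = z_1$ and has direction $e^{-i\phi_1/2}$, so $L$ is the altitude through $z_1$.) By the symmetry of the hypotheses in the indices $1,2,3$ (the relation $\phi_1+\phi_2+\phi_3 = 0$ is symmetric), the same conclusion holds for the altitudes through $z_2$ and $z_3$.

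I do not expect a genuine obstacle: the identities involved are elementary, being essentially the sum-to-product manipulations already carried out in Lemmas 1.1 and 2.1. The only points needing care are (i) hitting on the correct parameter value $\theta = -\phi_1/2$, and (ii) the borderline situations --- when $z_1$ is a cube root of unity the tangency point $e^{-2i\phi_1}+2e^{i\phi_1}$ is one of the three cusps of the deltoid, and when the triangle degenerates the side $z_2z_3$ must be read as the tangent line to the unit circle; both are handled as limiting cases, exactly as in the proofs of Lemmas 1.1 and 3.1.
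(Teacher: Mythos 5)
Your proof is correct, but it takes a different route from the paper's. The paper deduces the lemma from Theorem 2.3: with respect to the reflected triangle, the isogonal conjugate of $z_1$ (antipode of the vertex $-z_1$) is the point at infinity perpendicular to the sideline $-z_2$, $-z_3$, which is parallel to the side $z_2 z_3$ of the reference triangle; hence the tangent line that Theorem 2.3 attaches to the circumcircle point $\beta = z_1$ is exactly the altitude through $z_1$. You instead argue by direct computation: you identify the family of tangent lines as the lines through $e^{-2i\theta}$ with direction $e^{i\theta}$ (from Lemma 1.1 and the needle midpoint $\beta$), compute that the altitude through $z_1 = e^{i\phi_1}$ is the line through $z_1$ with direction $e^{-i\phi_1/2}$ using $\phi_1+\phi_2+\phi_3=0$, and match the two by taking $\theta = -\phi_1/2$; your parenthetical appeal to Lemma 2.1 with $\theta = \phi_1$ is an equivalent shortcut. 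Both arguments are sound. Yours is more self-contained and elementary --- it needs only Lemma 1.1 (or 2.1) and avoids isogonal conjugates entirely, and it has the small bonus of exhibiting the tangency point $e^{-2i\phi_1} + 2e^{i\phi_1}$ explicitly --- while the paper's version is shorter given the machinery of Theorem 2.3 and makes the link to the Kimberling-style construction explicit. Your handling of the borderline cases (cusps, degenerate triangles) as limits is consistent with how the paper treats them elsewhere.
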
 

\begin{proof}

Consider again the reflected triangle (whose vertices are $-z_1$, $-z_2$ and $-z_3$). With respect to this triangle, the isogonal conjugate of $-z_1$ is the point at infinity in the direction given by the sideline connecting $-z_2$ and $-z_3$. The point $z_1$ is antipodal to $-z_1$ on the circumcircle, so its isogonal conjugate must be the point at infinity in the direction perpendicular to the sideline connecting $-z_2$ and $-z_3$. But this sideline is parallel to the sideline of the reference triangle connecting $z_2$ and $z_3$. So the construction in Theorem 2.3 assigns to the point $z_1$, the altitude line through $z_1$ for the reference triangle. But Theorem 2.3 indicates that this line is a tangent line for the deltoid. Similarly for the other two altitude lines. 

\end{proof} 

\begin{thm} 
With $z_1$, $z_2$ and $z_3$ as the vertices of a amenable triangle, its orthocenter $z_H$ satisfies $z_H = z_1 + z_2 + z_3$, and this point is on the deltoid or in its interior.

\end{thm}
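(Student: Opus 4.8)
The plan is to realize $z_H$ as the intersection point of two tangent lines to the deltoid and then invoke Lemma 3.3. First I would recall from Lemma 3.4 (and the preceding discussion) that each of the three altitude lines of the reference triangle is tangent to the deltoid. The orthocenter $z_H = z_1 + z_2 + z_3$ is, by definition, the common intersection of these three altitude lines. Picking any two of them, say the altitudes through $z_1$ and through $z_2$, we have two distinct tangent lines of the deltoid meeting at $z_H$ (they are genuinely distinct whenever the triangle is nondegenerate, since two altitudes of a nondegenerate triangle are not parallel).

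Next I would apply Lemma 3.3, which states precisely that two tangent lines to the deltoid intersect at a point on the deltoid or in its interior. Since $z_H$ is such an intersection point, it lies on the deltoid or inside it, which is the claim. The identity $z_H = z_1 + z_2 + z_3$ itself was already established in Section 2 (it is the standard fact, cited via \cite{M}, that for an amenable triangle the orthocenter is the sum of the vertices), so that part needs only to be restated rather than reproved.

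The one genuinely delicate point is the degenerate case, where two of $z_1, z_2, z_3$ coincide: then the ``triangle'' collapses and the notion of ``altitude lines'' and their intersection needs care. The cleanest way to handle this is a limiting argument — approximate the degenerate configuration by nondegenerate amenable triangles $z_1^{(n)}, z_2^{(n)}, z_3^{(n)}$ (keeping the product equal to $1$ and the moduli equal to $1$), note that $z_H^{(n)} = z_1^{(n)} + z_2^{(n)} + z_3^{(n)}$ lies in the closed region bounded by the deltoid for each $n$, and pass to the limit, using that this closed region is closed. Alternatively, one can observe directly from the parametrization that when two vertices coincide, $z_H$ lands on the deltoid itself; indeed if $z_1 = z_2$ then $z_1^2 z_3 = 1$ forces $z_3 = z_1^{-2}$, and writing $z_1 = e^{i\theta}$ gives $z_H = 2 e^{i\theta} + e^{-2i\theta}$, which is exactly the point $\gamma$ on the deltoid by equation (1.3). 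I expect this bookkeeping around degeneracy to be the main obstacle; the nondegenerate case is an immediate corollary of Lemmas 3.3 and 3.4.
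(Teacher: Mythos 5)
Your proposal is correct and follows essentially the same route as the paper: the author likewise combines the fact that each altitude line is tangent to the deltoid with the lemma that two tangent lines meet on or inside the deltoid, identifying $z_H$ as that intersection point. Your extra care with the degenerate case (via a limiting argument or the direct observation that $z_1=z_2$ forces $z_H = 2e^{i\theta}+e^{-2i\theta}$ on the deltoid) is a welcome addition that the paper leaves implicit.
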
 

\begin{proof}

The fact that $z_H = z_1 + z_2 + z_3$ follows from $[(x_1+x_2+x_3) - x_1](x_2-x_3) + [(y_1+y_2+y_3) - y_1](y_2-y_3)  \; = \; (x_2+x_3)(x_2-x_3) +  (y_2+y_3)(y_2-y_3) \; = \; (x_2^2 + y_2^2) - (x_3^2+y_3^2) \; = \; 0.$ Now, by Lemma 3.3, each altitude line is a tangent line for the deltoid. By Lemma 3.2, these lines must intersect on the deltoid or in its interior. But of course $z_H$ is by definition this intersection point. 

\end{proof}

\begin{cor} 

Fix a real number $\theta$. Consider the needle whose slope is $\tan \theta$, that is, the needle parameterized by $2 \lambda e^{i \theta} + e^{-2i \theta}$ ($-1 \le \lambda \le 1$).  If the orthocenter $z_H$ of the triangle with vertices $z_1$, $z_2$ and $z_3$ is on the needle, then $z_H \; = \; 2 \lambda_0 e^{i \theta} + e^{-2i \theta}$ for some $\lambda_0$ with $-1 \le \lambda_0 \le 1$, and $\{ \, z_1, z_2, z_3 \, \} \; = \; \{ \, e^{-2 i \theta}$, $(\lambda_0 \pm i \sqrt{1-\lambda_0^2}) \, e^{i\theta} \, \}$ 

\end{cor}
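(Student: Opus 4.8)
The plan is to exploit the fact that an amenable triangle is completely determined by its orthocenter $z_H$. As observed just before Theorem 3.4, the vertices satisfy $z_1 + z_2 + z_3 = z_H$ and $z_1 z_2 + z_2 z_3 + z_3 z_1 = \overline{z_H}$, while $z_1 z_2 z_3 = 1$ by the amenability hypothesis. These are the elementary symmetric functions of $z_1, z_2, z_3$, so $z_1, z_2, z_3$ are exactly the three roots, counted with multiplicity, of the monic cubic
$$p(t) \; = \; t^3 - z_H\, t^2 + \overline{z_H}\, t - 1.$$
Hence it suffices to factor $p(t)$ explicitly.

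Next I would record what the hypothesis ``$z_H$ lies on the needle'' buys us. By the given parameterization it means $z_H = 2 \lambda_0 e^{i\theta} + e^{-2i\theta}$ for a (unique) real number $\lambda_0$; explicitly $\lambda_0 = (z_H - e^{-2i\theta})/(2 e^{i\theta})$, and membership in the needle is precisely the assertion that this number is real and lies in $[-1,1]$. Since $\theta$ and $\lambda_0$ are real, taking complex conjugates gives $\overline{z_H} = 2 \lambda_0 e^{-i\theta} + e^{2i\theta}$.

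The remaining step is a one-line factorization check. The two proposed vertices $(\lambda_0 \pm i\sqrt{1-\lambda_0^2})\, e^{i\theta}$ have sum $2 \lambda_0 e^{i\theta}$ and product $(\lambda_0^2 + (1-\lambda_0^2))\, e^{2i\theta} = e^{2i\theta}$, so they are the two roots of $q(t) = t^2 - 2 \lambda_0 e^{i\theta}\, t + e^{2i\theta}$. I would then expand $(t - e^{-2i\theta})\, q(t)$ and, substituting the expressions for $z_H$ and $\overline{z_H}$ from the previous paragraph, check that it equals $p(t)$. Since factorization of a monic cubic over $\mathbb{C}$ is unique, this yields $\{ z_1, z_2, z_3 \} = \{\, e^{-2i\theta},\ (\lambda_0 + i\sqrt{1-\lambda_0^2})\, e^{i\theta},\ (\lambda_0 - i\sqrt{1-\lambda_0^2})\, e^{i\theta} \,\}$ as multisets, which is the claimed conclusion; when $\lambda_0 = \pm 1$ the last two points coincide, which is the degenerate case where $z_H$ lies on the deltoid itself.

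I do not expect a genuine obstacle: every step is forced. The only point requiring a little care is that it is being on the \emph{needle}, rather than merely on the tangent line carrying it, that guarantees $|\lambda_0| \le 1$ and hence that $\sqrt{1 - \lambda_0^2}$ is real --- so that the two points $(\lambda_0 \pm i\sqrt{1-\lambda_0^2})\, e^{i\theta}$ genuinely lie on the unit circle and the resulting triangle is amenable. (If one preferred, this reality could instead be recovered a posteriori from Theorem 3.4 together with Lemma 3.1.) Everything else is the elementary symmetric function identity together with uniqueness of polynomial factorization.
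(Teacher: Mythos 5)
Your proposal is correct and takes essentially the same route as the paper: exhibit the candidate multiset $\{\,e^{-2i\theta},\ (\lambda_0\pm i\sqrt{1-\lambda_0^2})\,e^{i\theta}\,\}$ and verify it satisfies the amenability conditions with sum $z_H$. The only difference is that you make explicit (via the factorization $(t-e^{-2i\theta})(t^2-2\lambda_0 e^{i\theta}t+e^{2i\theta})=t^3-z_H t^2+\overline{z_H}t-1$) the uniqueness of the root multiset of the cubic, which the paper's one-line proof leaves implicit and only formalizes later in Lemma 4.1.
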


\begin{proof} 

Just check that for these choices of $z_1$, $z_2$ and $z_3$, $|z_1| = |z_2| = |z_3| = z_1 z_2 z_3 = 1$ and $z_1 + z_2 + z_3 = 2 \lambda_0 e^{i \theta} + e^{-2i \theta}$.

\end{proof}

\section{ A family of transformations of triangles} 

Here and throughout the remainder of this paper, we continue to let  $z_1$, $z_2$ and $z_3$ be complex numbers satisfying 
$|z_1| = |z_2| = |z_3| = z_1 z_2 z_3 = 1$. We continue to regard these as the vertices of an amenable triangle, though this would be a degenerate triangle if any two of the vertices are the same, which is allowed. We continue to let  $z_H$ denote the orthocenter of the triangle, noting again that $z_H = z_1 + z_2 + z_3$ and that it is guaranteed to be on or inside the deltoid.

Fix an integer $n$. A function $p_n$ will be defined and investigated in this and in the next section of this paper. The domain and codomain of $p_n$ are the set of complex numbers $z$ on or inside the deltoid, that is, the set of  complex numbers $z$ satisfying   

\begin{equation}
z^2 \, \bar{z}^2  - 4(z^3 + \bar{z}^3 ) + 18 z \bar{z} - 27 \; \le \; 0.
\end{equation}

\noindent We will need the following fact before defining $p_n$. 

\begin{lem} 
If $z_H$ satisfies (4.1) when $z_H$ is substituted for $z$, then there exists a unique multi-set of numbers $\{ z_1, z_2, z_3 \}$ such that $|z_1| = |z_2| = |z_3| = z_1 z_2 z_3 = 1$ and $z_1 + z_2 + z_3 = z_H$. 
 \end{lem}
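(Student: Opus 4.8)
The plan is to route everything through the cubic polynomial
$$p(t) \; = \; t^{3} - z_H\, t^{2} + \overline{z_H}\, t - 1,$$
and to show that its three roots (counted with multiplicity) are exactly the sought‑after $z_1,z_2,z_3$. Uniqueness then comes essentially for free: if $|z_1|=|z_2|=|z_3|=z_1z_2z_3=1$ and $z_1+z_2+z_3=z_H$, then $z_jz_k = z_1z_2z_3/z_l=\overline{z_l}$ for $\{j,k,l\}=\{1,2,3\}$, so $z_1z_2+z_2z_3+z_3z_1=\overline{z_1}+\overline{z_2}+\overline{z_3}=\overline{z_H}$, and hence $(t-z_1)(t-z_2)(t-z_3)=p(t)$. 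Since $p$ depends only on $z_H$, any two admissible multi‑sets coincide, being the root multi‑set of one and the same polynomial. The entire remaining content of the lemma is therefore the existence assertion: when $z_H$ satisfies (4.1), all three roots of $p$ lie on the unit circle. (Conversely, if they do, the constant term of $p$ gives $z_1z_2z_3=1$ and the $t^2$-coefficient gives $z_1+z_2+z_3=z_H$, so the roots form an admissible multi‑set.)

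For existence I would first record a reflection symmetry of $p$: a one‑line computation gives $\overline{\,\bar t^{\,3}\,p(1/\bar t)\,} = -\,p(t)$, so (since $t=0$ is never a root) $p(t)=0$ if and only if $p(1/\bar t)=0$. Thus inversion in the unit circle, $t\mapsto 1/\bar t$, permutes the root multi‑set of $p$; as $p$ has odd degree, at least one root is fixed, i.e.\ lies on the unit circle, and any root off the unit circle is paired with its mirror image on the opposite side. Next I would locate the $z_H$ for which $p$ has a repeated root. Such a root $t_0$ must be fixed by the inversion — otherwise $t_0$ and $1/\bar t_0$ would be two distinct double roots of a cubic — so $t_0=e^{i\psi}$ for some real $\psi$; the relation ``product of the roots equals $1$'' then forces the third root to be $e^{-2i\psi}$, and Vieta gives $z_H = 2e^{i\psi}+e^{-2i\psi}$. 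By the parametrization (1.3) this says precisely that $z_H$ lies on the deltoid, and conversely at each such $z_H$ one checks directly that $p(t)=(t-e^{i\psi})^{2}(t-e^{-2i\psi})$, so the deltoid itself is accounted for.

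With these two facts, existence follows from a connectedness argument. The root multi‑set of $p$ varies continuously with $z_H$. At $z_H=0$, $p(t)=t^{3}-1$ has the three cube roots of unity as its roots, all on the unit circle, and $z_H=0$ lies in the open interior of the deltoid. Given any $z_H$ in the open interior, join it to $0$ by a path $\gamma$ staying in the open interior (which is connected, being the inside of the simple closed curve discussed before Lemma 3.1). If $p_{z_H}$ had a root off the unit circle, let $s^{*}$ be the supremum of the parameters $s$ for which all roots of $p_{\gamma(s)}$ lie on the unit circle; this set is closed and contains $0$, so at $s^{*}$ all roots still lie on the circle, while just beyond $s^{*}$ some root leaves it. By the reflection symmetry that escaping root travels together with its mirror image, and letting $s\downarrow s^{*}$ (after passing to a subsequence) the two of them converge to a common point of the circle, which is therefore a repeated root of $p_{\gamma(s^{*})}$. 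The previous step then puts $\gamma(s^{*})$ on the deltoid, contradicting that $\gamma$ stays in the open interior. Hence all three roots of $p_{z_H}$ lie on the unit circle; together with the boundary case $z_H = 2e^{i\psi}+e^{-2i\psi}$ this covers every $z_H$ satisfying (4.1).

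The hard part is really this last step: one must be certain that the root configuration can pass from ``all roots on the unit circle'' to ``some root off it'' only by going through a repeated root, and that repeated roots of $p$ occur exactly on the deltoid. In other words, the genuine work is the identification of the discriminant locus of $p$ with the deltoid curve, together with the continuity/connectedness bookkeeping; the rest is routine manipulation of the elementary symmetric functions of $z_1,z_2,z_3$.
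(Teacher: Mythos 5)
Your proof is correct, and while your uniqueness half (Vieta applied to the cubic $t^3 - z_H t^2 + \overline{z_H}\,t - 1$) is exactly the paper's argument, your existence half takes a genuinely different route. The paper simply cites Theorem 4 of \cite{M}, which substitutes $(iw+1)/(iw-1)$ for the variable and studies the real roots of the resulting cubic in $w$, together with the observation that the discriminant of the cubic is precisely the left-hand side of (4.1). You instead give a self-contained topological argument: the identity $\overline{\bar t^{\,3}\,p(1/\bar t)}=-p(t)$ shows the root multiset is invariant under inversion in the unit circle, a repeated root must therefore lie on that circle and (via Vieta and the parametrization (1.3)) occurs exactly when $z_H$ is on the deltoid, and a continuity-plus-connectedness argument along a path from $z_H=0$ shows the roots cannot leave the circle without first colliding, which would force the path onto the deltoid. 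What your version buys is independence from the external reference and a transparent geometric picture (the deltoid is exactly the root-collision locus); what it costs is reliance on continuity of roots and on topological facts about the deltoid's interior (openness, connectedness, and its identification with the strict inequality in (4.1)) that the paper treats only informally before Lemma 3.1 --- in effect you re-derive, via the parametrization rather than the algebraic discriminant formula, the fact that the discriminant locus of the cubic is the deltoid. You could even shortcut the path argument using the paper's observation that the region (4.1) is star-shaped about the origin, so the straight segment from $z_H$ to $0$ already stays inside.
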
 

\begin{proof}
As indicated previously, such numbers would necessarily also satisfy $z_2 z_3 + z_3 z_1 + z_1 z_2 \; = \; \overline{z_H}$, and so these numbers would be the roots of the following cubic equation:

\begin{equation}
z^3  \; - \; z_H \, z^2 \; + \; \overline{z_H} \, z \; - \; 1 \; = \; 0.
\end{equation}

\noindent Of course, the multi-set of roots of this is equation is unique, establishing the uniqueness of $\{ z_1, z_2, z_3 \}$. To establish the existence of a suitable $\{ z_1, z_2, z_3 \}$, we will cite Theorem 4 in \cite{M}, which uses the fact that the discriminant of the cubic polynomial here is the familiar $z_H^2 \, \overline{z_H}^{\, 2}  - 4(z_H^{\, 3} + \overline{z_H}^{\, 3} ) + 18 z_H \, \overline{z_H} - 27$, and which transforms the cubic, by substituting \linebreak  $(i w + 1)/(i w - 1)$ for $z$, to obtain a cubic in $w$ whose real roots are then investigated. The theorem asserts that for any complex number $z_H$, the discriminant is real, that at least one of the polynomial roots is on the unit circle, and that all three roots are on the unit circle if and only if the discriminant is negative. It is actually more accurate to say ``non-positive" here instead of ``negative" because the case when the discriminant is zero also results in all of the roots lying on the unit circle though now there will be a repeated root. 

\end{proof} 

We are now prepared to define the function $p_n$. Given a number $z_H$ on or inside the deltoid, let $\{ z_1, z_2, z_3 \}$ be the set of solutions to (4.2). These numbers can be regarded as the vertices of an amenable triangle. The set of numbers $\{ z_1^n, z_2^n, z_3^n \}$ also satisfies the same properties as the $\{ z_1, z_2, z_3 \}$, namely, $|z_1^n| = |z_2^n| = |z_2^n| = z_1^n z_2^n z_3^n = 1$. Regarding $\{ z_1^n, z_2^n, z_3^n \}$ as the vertex set for another amenable triangle, its orthocenter is simply  $ z_1^n + z_2^n + z_3^n$, and of course, this is on or inside the deltoid. $p_n(z_H)$ is now defined to equal  $z_1^n + z_2^n + z_3^n$. 

\begin{lem} 

A few examples of $p_n(z)$ are as follows

\begin{itemize}

\item[] \ \ \ \ $p_0(z) \; = \; 3$

\item[] \ \ \ \ $p_1(z) \; = \; z$

\item[] \ \ \ \ $p_2(z) \; = \; z^2 - 2 \bar{z}$

\item[] \ \ \ \ $p_3(z) \; = \; z^3 - 3 z \bar{z} + 3$

\item[] \ \ \ \ $p_4(z) \; = \; z^4 - 4 z^2 \bar{z} + 2 \bar{z}^2 + 4z$

\item[] \ \ \ \ $p_5(z) \; = \; z^5 - 5 z^3 \bar{z} + 5 z \bar{z}^2  + 5 z^2 - 5 \bar{z} $

\end{itemize} 

\noindent Additionally, the functions $p_n$ satisfy the following recurrence relation for $n \ge 4$: 

\begin{equation}
p_n(z) \; = \;  z \, p_{n-1}(z) \; - \; \bar{z} \, p_{n-2}(z) \; + \; p_{n-3}(z)
\end{equation}

\end{lem}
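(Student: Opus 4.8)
The plan is to recognize $p_n(z)$ as a power-sum symmetric function and to exploit Newton's identities. Writing $z_1, z_2, z_3$ for the roots of the cubic (4.2), the elementary symmetric functions are $e_1 = z_1 + z_2 + z_3 = z_H = z$, $e_2 = z_1 z_2 + z_2 z_3 + z_3 z_1 = \bar z_H = \bar z$, and $e_3 = z_1 z_2 z_3 = 1$. By definition $p_n(z) = z_1^n + z_2^n + z_3^n$, the $n$-th power sum of these three roots. The first step is to record this identification explicitly, so that all that follows is a statement about power sums of a cubic with known coefficients.

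The second step is the recurrence. Since $z_1, z_2, z_3$ each satisfy their defining cubic $t^3 = e_1 t^2 - e_2 t + e_3 = z\, t^2 - \bar z\, t + 1$, multiplying through by $t^{\,n-3}$ and summing over the three roots gives
\begin{equation}
p_n(z) \; = \; z\, p_{n-1}(z) \; - \; \bar z\, p_{n-2}(z) \; + \; p_{n-3}(z),
\end{equation}
valid for every $n \ge 3$ (not just $n \ge 4$). This is the claimed recurrence (4.4), and it is essentially immediate once the symmetric-function framework is in place. I would present it in one or two lines.

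The third step is to verify the listed base cases. For $n \le 2$ one uses Newton's identities directly: $p_0 = 3$; $p_1 = e_1 = z$; $p_2 = e_1 p_1 - 2 e_2 = z^2 - 2\bar z$. For $n \ge 3$ one can either continue with Newton's identities ($p_3 = e_1 p_2 - e_2 p_1 + 3 e_3$, etc.) or simply feed $p_0, p_1, p_2$ into the recurrence just established and crank out $p_3, p_4, p_5$. The latter is cleaner: $p_3 = z p_2 - \bar z p_1 + p_0 = z^3 - 2 z \bar z - z \bar z + 3 = z^3 - 3 z\bar z + 3$, and similarly for $p_4$ and $p_5$, matching the table.

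None of this presents a genuine obstacle; the only thing to be careful about is the indexing. The recurrence in fact holds for all $n \ge 3$, and in particular one should check the $n = 3$ instance against the stated $p_3$ to confirm consistency with the table before using the recurrence to generate $p_4$ and $p_5$ — the paper's restriction to $n \ge 4$ is merely because the three ``initial data'' it chooses to list explicitly run only through $p_5$, but the argument gives more. (One should also note, for completeness of the domain/codomain claim implicit in the setup, that $\overline{p_n(z)} = \overline{z_1^n} + \overline{z_2^n} + \overline{z_3^n} = z_1^{-n} + z_2^{-n} + z_3^{-n}$ is again a power sum of three unit-modulus numbers with product $1$, so by Lemma 3.1 / Theorem 3.4 it lies on or inside the deltoid — but this is tangential to the present lemma.)
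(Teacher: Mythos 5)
Your proposal is correct and takes essentially the same route as the paper: both identify $p_n(z)$ as the power sum of the roots of the cubic (4.2) with $e_1 = z$, $e_2 = \bar z$, $e_3 = 1$, obtain the three-term recurrence (the paper by citing Newton's formula for $n > N = 3$, you by summing the characteristic relation $t^3 = z t^2 - \bar z t + 1$ over the roots, which is the same fact), and then generate the tabulated cases. Your observation that the recurrence already holds at $n = 3$ is a minor, correct sharpening.
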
 

\begin{proof}
From (4.2), we know that the elementary symmetric polynomials in $z_1, z_2, z_3$ have the following prescribed values: $\sigma_0 = 3$, $\sigma_1 = z_H$, $\sigma_2 = \overline{z_H}$, $\sigma_3 = 1$. Well-known identities of Newton relate the elementary symmetric polynomials and the power-sum elementary polynomials, conventionally denoted $p_n$. For instance, $p_2 = \sigma_1 p_1 - 2 \sigma_2$ and $p_3 = \sigma_1 p_2 - \sigma_2 p_1 + 3 \sigma_3$. Also, one of Newton's formulas states that if $n$ exceeds the number of indeterminates $N$ used in the polynomials, then 

$$ p_n \; = \; \sum_{j=n-N}^{n-1} (-1)^{n-1+j} \, e_{n-j} \, p_j $$ 

\noindent For our situation, $N = 3$ and $p_n \; = \; e_3 \, p_{n-3} - e_2 \, p_{n-2} + e_1 \, p_{n-1} \; = \; p_{n-3} - \bar{z} \, p_{n-2} + z \, p_{n-1}$.

\end{proof}

\begin{lem} 
For $n \ge 1$: 

\begin{equation}
p_n(z) \; \;  = \; \; n \cdot \,  \sum_{{\alpha, \beta, \gamma \ge 0} \atop {(\alpha+2\beta+3\gamma = n)}} 
\frac{(\alpha+\beta+\gamma-1)!}{\alpha! \, \beta! \, \gamma!} \; z^\alpha \left( - \bar{z} \right)^\beta
\end{equation}

\noindent Additionally, $p_n( e^{\pm 2\pi i / 3} z ) \; = \; e^{\pm 2\pi i n / 3} \, p_n(z)$.  

\end{lem}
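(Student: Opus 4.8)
The plan is to use the fact, established in the proof of Lemma 4.1, that $p_n(z)$ is exactly the $n$-th power sum $z_1^n+z_2^n+z_3^n$ of the three roots of the cubic (4.2), whose elementary symmetric functions are $e_1=z$, $e_2=\bar z$, $e_3=1$ (and $e_k=0$ for $k\ge 4$, there being only three roots). From this point of view, formula (4.4) is precisely the Girard--Waring expansion of a power sum in terms of the elementary symmetric functions, specialized to this case. Rather than quote that formula, I would derive it in a couple of lines from a generating-function identity, which keeps everything self-contained.

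Concretely, set $D(t)=(1-z_1t)(1-z_2t)(1-z_3t)=1-e_1t+e_2t^2-e_3t^3$. On one hand, $-\log D(t)=-\sum_{j}\log(1-z_jt)=\sum_{j}\sum_{m\ge 1}\frac{z_j^m}{m}t^m=\sum_{m\ge 1}\frac{p_m}{m}t^m$; since $|z_j|=1$ this holds as a convergent identity for $|t|<1$ (and in any case as an identity of formal power series). On the other hand, writing $u=e_1t-e_2t^2+e_3t^3$ and expanding $-\log D(t)=-\log(1-u)=\sum_{k\ge 1}\frac{u^k}{k}$, the multinomial theorem gives $u^k=\sum_{\alpha+\beta+\gamma=k}\frac{k!}{\alpha!\,\beta!\,\gamma!}(e_1t)^\alpha(-e_2t^2)^\beta(e_3t^3)^\gamma$, so the coefficient of $t^n$ in $-\log D(t)$ equals $\sum_{\alpha+2\beta+3\gamma=n}\frac{(\alpha+\beta+\gamma-1)!}{\alpha!\,\beta!\,\gamma!}\,e_1^\alpha(-e_2)^\beta e_3^\gamma$. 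Equating the two expressions for this coefficient, multiplying by $n$, and substituting $e_1=z$, $e_2=\bar z$, $e_3=1$ (so $e_3^\gamma$ disappears and $(-e_2)^\beta$ becomes $(-\bar z)^\beta$) gives exactly (4.4).

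For the rotation identity I would argue directly from the definition of $p_n$. Let $\omega=e^{2\pi i/3}$. Every monomial appearing in (4.1) --- namely $z^2\bar z^2$, $z^3$, $\bar z^3$, $z\bar z$, and $1$ --- is left unchanged by $z\mapsto\omega z$, so the region (4.1) is invariant under multiplication by $\omega$; hence $\omega z_H$ lies on or inside the deltoid exactly when $z_H$ does, and $p_n(\omega z_H)$ is defined. If $\{z_1,z_2,z_3\}$ is the root multi-set of (4.2) attached to $z_H$, then $\{\omega z_1,\omega z_2,\omega z_3\}$ consists of unit-modulus numbers with product $\omega^3z_1z_2z_3=1$ and sum $\omega z_H$, so by the uniqueness part of Lemma 4.1 it is the root multi-set attached to $\omega z_H$; therefore $p_n(\omega z_H)=\sum_j(\omega z_j)^n=\omega^n p_n(z_H)=e^{2\pi i n/3}p_n(z_H)$. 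The statement for $e^{-2\pi i/3}=\omega^{-1}$ is obtained in exactly the same way, using $\omega^{-3}=1$. Alternatively, one can read this off (4.4): replacing $z$ by $\omega z$ multiplies the $(\alpha,\beta,\gamma)$-term by $\omega^\alpha\bar\omega^\beta=\omega^{\alpha+2\beta}$ (since $\bar\omega=\omega^2$), and $\omega^{\alpha+2\beta}=\omega^n$ for every term in the sum because $\alpha+2\beta\equiv\alpha+2\beta+3\gamma=n\pmod 3$, so the entire sum is scaled by $\omega^n$.

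I do not anticipate a genuine obstacle; the argument is essentially bookkeeping. The only points that need a little care are (i) tracking the signs and exponents so that the $e_3^\gamma=1^\gamma$ factor drops out and $(-e_2)^\beta$ reproduces the $(-\bar z)^\beta$ of (4.4), and (ii) treating the $-\log D(t)$ manipulation cleanly --- most safely as an identity of formal power series, or analytically for $|t|<1$, which is legitimate since all $z_j$ lie on the unit circle. One could alternatively try to establish (4.4) by induction on $n$ via the recurrence (4.3), but verifying that the right-hand side of (4.4) satisfies that recurrence is a messier combinatorial identity than the generating-function computation, so I would not take that route.
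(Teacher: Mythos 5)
Your proposal is correct, but it reaches (4.4) by a genuinely different route. The paper proves the summation formula by induction on $n$ using the recurrence (4.3) of Lemma 4.2 (and in fact only asserts that this check is ``straightforward,'' leaving the details to the reader); you instead derive it from the generating-function identity $-\log\prod_j(1-z_jt)=\sum_{m\ge1}\frac{p_m}{m}t^m$ expanded via $-\log(1-u)$ with $u=e_1t-e_2t^2+e_3t^3$, i.e.\ the Girard--Waring expansion specialized to $e_1=z$, $e_2=\bar z$, $e_3=1$. Your computation is sound: the coefficient of $t^n$ in $\sum_k u^k/k$ is indeed $\sum_{\alpha+2\beta+3\gamma=n}\frac{(\alpha+\beta+\gamma-1)!}{\alpha!\,\beta!\,\gamma!}e_1^\alpha(-e_2)^\beta e_3^\gamma$, and equating with $p_n/n$ gives (4.4). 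What your route buys is a self-contained closed-form derivation that explains where the coefficient $(\alpha+\beta+\gamma-1)!/(\alpha!\beta!\gamma!)$ comes from, rather than merely verifying it; what the paper's route buys is that it leans only on the already-established recurrence and stays elementary (and, for what it is worth, the inductive check reduces to the clean identity $(n-1)\alpha+(n-2)\beta+(n-3)\gamma=n(\alpha+\beta+\gamma-1)$ when $\alpha+2\beta+3\gamma=n$, so it is less messy than you feared). For the rotation identity, your second argument --- each term of (4.4) is scaled by $\omega^{\alpha+2\beta}=\omega^{\alpha-\beta}=\omega^{n}$ since $\alpha+2\beta\equiv n\pmod 3$ --- is exactly the paper's argument; your first argument, via invariance of the domain and uniqueness of the root multi-set in Lemma 4.1, is an additional, more conceptual proof that the paper does not give.
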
 

\begin{proof}
The summation formula is straightforward to check, by induction, using Lemma 4.2. Now, if $z$ is replaced with $e^{\pm 2\pi i / 3} z$, then the general term in the summation will be multiplied by $(e^{\pm 2\pi i / 3})^{\alpha - \beta}$. But $\alpha - \beta + 2n \; = \; 3\alpha + 3\beta + 6\gamma \; \equiv \; 0$ (mod 3). So $\alpha - \beta \; \equiv \; n$ (mod 3). Thus, $(e^{\pm 2\pi i / 3})^{\alpha - \beta} \; = \; e^{\pm 2\pi i n / 3}$.
\end{proof}

\section{Triangles with special ``powers"}

In this section, the functions $p_n$ introduced in the previous section will be investigated with an eye towards identifying points whose image is in some way special, and thereby also understand something about amenable triangles whose ``powers" are in some way special. We begin by looking at points $z$ inside the deltoid that are mapped by $p_n$ to points on the deltoid. Since only degenerate amenable triangles have an orthocenter on the deltoid, the points identified here will be the orthocenters of triangles with the property that at least two of the vertices have the same $n$-th power. This may not be a particularly interesting question to ask about the triangles, but the curves of points $z$ for which $p_n(z)$ is on the deltoid are rather interesting, as is the method for obtaining them. 

\begin{lem}
When $w^{-n} B +  w^{2n}$ is substituted for $z$ in $z^2 \, \bar{z}^2  - 4(z^3 + \bar{z}^3 ) + 18 z \bar{z} - 27$, assuming that $|w|$ = 1,  the result factors as $(B-2)(B+2) \, w^{-6n} (1 - B w^{3n} + w^{6n})^2$. 
\end{lem}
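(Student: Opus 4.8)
The plan is to perform the substitution directly and to organize the resulting expression around a single symmetric quantity. Since $|w| = 1$ we may write $\bar w = w^{-1}$; it is convenient to abbreviate $t = w^{n}$, so that $|t| = 1$ and $\bar t = t^{-1}$, and (treating $B$ as a real parameter, as it is in the intended applications) this gives $z = B t^{-1} + t^{2}$ and $\bar z = B t + t^{-2}$. First I would compute the three building blocks $z\bar z$, $z^{3} + \bar z^{3}$, and $z^{2}\bar z^{2} = (z\bar z)^{2}$. Each of these is manifestly invariant under $t \mapsto t^{-1}$, hence is a polynomial in $B$ and in $s := t^{3} + t^{-3} = w^{3n} + w^{-3n}$ alone. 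Explicitly, $z\bar z = B^{2} + 1 + B s$, and, using $t^{6} + t^{-6} = s^{2} - 2$, one gets $z^{3} + \bar z^{3} = (B^{3} + 3B)\,s + 6B^{2} + s^{2} - 2$.

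Next I would substitute these into $z^{2}\bar z^{2} - 4(z^{3} + \bar z^{3}) + 18 z\bar z - 27$ and collect the result by powers of $s$. I expect the coefficient of $s^{2}$ to be $B^{2} - 4$, the coefficient of $s$ to be $-2B(B^{2} - 4)$, and the constant term to be $B^{4} - 4B^{2} = B^{2}(B^{2} - 4)$, so that the whole expression equals
\[
(B^{2} - 4)\bigl(s^{2} - 2Bs + B^{2}\bigr) \;=\; (B^{2} - 4)\,(s - B)^{2}.
\]
It then remains only to rewrite the two factors: $B^{2} - 4 = (B - 2)(B + 2)$, and
\[
s - B \;=\; w^{3n} + w^{-3n} - B \;=\; w^{-3n}\bigl(w^{6n} - B w^{3n} + 1\bigr),
\]
so $(s - B)^{2} = w^{-6n}\bigl(1 - B w^{3n} + w^{6n}\bigr)^{2}$, which yields the claimed factorization. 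For $n = 1$ this is closely related to the computation in the proof of Lemma 3.1.

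The argument is entirely computational, so the only real obstacle is bookkeeping. The one place to be careful is the final cancellation that produces the perfect square $(s - B)^{2}$: it is cleanest to keep everything organized by powers of $s$ from the outset, so that the coefficients $B^{2} - 4$, $-2B(B^{2} - 4)$, $B^{2}(B^{2} - 4)$ emerge directly, rather than expanding into monomials in $t$ and re-factoring at the end. It is also worth noting explicitly that the hypothesis $|w| = 1$ is exactly what allows $\bar z$ to be written as $w^{n}B + w^{-2n}$, so that $\bar z$ — like $z$ — is a function of $t^{\pm 1}$ and $B$ only; without this the substituted polynomial would involve $\bar B$ and would not factor as stated.
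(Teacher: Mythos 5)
Your computation is correct and follows essentially the same route as the paper: substitute, reduce everything to the quantities $z\bar z$ and $z^3+\bar z^3$, and expand. The only difference is organizational --- you collect in the symmetric variable $s=w^{3n}+w^{-3n}$ and factor the resulting quadratic $(B^2-4)(s-B)^2$ directly, whereas the paper lists the coefficients of the powers of $w^{3n}$ on both sides and compares them; your version is slightly cleaner since it derives the factorization rather than merely verifying it.
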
 

\begin{proof} 
\ \\ 
$$\begin{array}{lll} 
z \bar{z} & \; = \; & (w^{-n} B + w^{2n})(w^n B + w^{-2n}) \; = \; B w^{-3n} + (1+B^2) + B w^{3n} \\ 
z^2 \bar{z}^2  & \; = \; & B^2 w^{-6n} + 2B(1+B^2) w^{-3n} + (1+4B^2+B^4) + 2B(1+B^2) w^{3n} + B^2 w^{6n} \\ 
z^3 + \bar{z}^3 & \; = \; & w^{-6n} + B(3+B^2) w^{-3n} +  6B^2 + B(3+B^2) w^{3n}  + w^{6n} 
\end{array}$$ 

\noindent Here are the coefficients of powers of $w$ in the expansion of $z^2 \, \bar{z}^2  - 4(z^3 + \bar{z}^3 ) + 18 z \bar{z} - 27$.  

\vspace{-3mm} 
$$\begin{array}{llll} 
w^{-6} & \; :  \; & (B-2)(B+2) & \hspace{9cm} \ \\
w^{-3} & \; :  \; & 2B(2-B)(B+2) & \ \\
w^{0} & \; :  \; & (B-2)(B+2)(B^2+2) & \ \\
w^{3} & \; :  \; &  2B(2-B)(B+2) & \ \\ 
w^{6} & \; :  \; & (B-2)(B+2) & \ 
\end{array}$$ 

\noindent The same coefficients occur in the expansion of $(B-2)(B+2) \, w^{-6n} (1 - B w^{3n} + w^{6n})^2$. 

\end{proof} 

\begin{lem}
Assuming that $n \ge 1$, that $A$ is real and that $|w| = 1$, \ $w^n \, p_n(w^2 + A/w) \, - \, w^{3n}$ is independent of $w$, and is a polynomial $q_n(A)$ in $A$ only. Moreover, $q_n(A) = A q_{n-1}(A) - q_{n-2}(A)$ for all $n \ge 3$. In fact, $q_n(A) \; = \;  (-i)^n L_n(i A)$ where $L_n$ is the $n$-th Lucas polynomial. Consequently, $p_n(w^2 + A/w) \; = \; w^{-n} q_n(A) + w^{2n} \; = \; (-i)^n w^{-n}  L_n(i A) + w^{2n}$.
\end{lem}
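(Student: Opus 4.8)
The plan is to compute the three vertices of the underlying triangle explicitly and thereby reduce everything to the quadratic $u^2 - Au + 1$. First I would record that, since $|w| = 1$ and $A$ is real, the conjugate of $z = w^2 + A/w$ is $\bar{z} = w^{-2} + Aw$. The cubic (4.2) whose roots $z_1, z_2, z_3$ define $p_n(z)$, namely $t^3 - z\,t^2 + \bar{z}\,t - 1$, then factors: one checks directly that
$$ t^3 - z\,t^2 + \bar{z}\,t - 1 \;=\; (t - w^2)\bigl(t^2 - (A/w)\,t + w^{-2}\bigr), $$
so one root is $z_1 = w^2$ and the other two are $z_2 = u_+/w$, $z_3 = u_-/w$, where $u_\pm$ are the roots of $u^2 - Au + 1 = 0$; in particular $u_+u_- = 1$ and $u_+ + u_- = A$. (For $|A| \le 2$ this $z$ lies on or inside the deltoid --- e.g.\ by Lemma 5.1 applied with $n = 1$ --- so $p_n$ is genuinely defined there; for $|A| > 2$ one simply reads the identity as a statement about the roots of the cubic.)

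Next I would substitute into the definition $p_n(z) = z_1^n + z_2^n + z_3^n$, obtaining
$$ p_n(w^2 + A/w) \;=\; w^{2n} + w^{-n}\bigl(u_+^n + u_-^n\bigr), $$
so that $w^n p_n(w^2 + A/w) - w^{3n} = u_+^n + u_-^n$, which depends only on $A$; call it $q_n(A)$. Since $u_\pm$ satisfy $u^2 = Au - 1$, the power sums $q_n = u_+^n + u_-^n$ satisfy $q_0 = 2$, $q_1 = A$, and $q_n = A q_{n-1} - q_{n-2}$ for $n \ge 2$ (Newton's identities, or add the relations $u_\pm^{\,n-2}(u_\pm^2 - Au_\pm + 1) = 0$); in particular each $q_n$ is a polynomial in $A$, as claimed.

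Finally I would identify $q_n$ with the Lucas polynomials $L_n$, defined by $L_0(x) = 2$, $L_1(x) = x$, $L_n(x) = x L_{n-1}(x) + L_{n-2}(x)$. An induction on $n$ shows $q_n(A) = (-i)^n L_n(iA)$: the cases $n = 0, 1$ are immediate, and the inductive step amounts to the identity $(-i)^n\bigl(iA\,L_{n-1}(iA) + L_{n-2}(iA)\bigr) = A(-i)^{n-1} L_{n-1}(iA) - (-i)^{n-2} L_{n-2}(iA)$, which follows from $(-i)\cdot i = 1$ and $(-i)^2 = -1$ together with the recurrences for $q_n$ and $L_n$. Substituting $q_n(A) = (-i)^n L_n(iA)$ back into the displayed formula for $p_n(w^2 + A/w)$ yields the last assertion. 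There is no serious obstacle here; the one observation that makes the argument collapse cleanly --- and which I would flag as the crux --- is that $w^2$ is automatically a root of (4.2) for this particular $z$, so the cubic splits off a linear factor and the whole question reduces to the single quadratic $u^2 - Au + 1$.
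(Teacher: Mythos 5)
Your proof is correct, but it takes a genuinely different route from the paper's. The paper proves the lemma by induction on $n$ using the recurrence $p_n(z) = z\,p_{n-1}(z) - \bar z\,p_{n-2}(z) + p_{n-3}(z)$ of Lemma 4.2: after substituting $z = w^2 + A/w$, $\bar z = w^{-2} + Aw$, the coefficient of $w^3$ in the resulting expression is $q_{n-1}(A) - A\,q_{n-2}(A) + q_{n-3}(A)$, which vanishes by the inductively known recurrence for $q$, leaving $A\,q_{n-1}(A) - q_{n-2}(A)$; the base cases $n=1,2,3$ are checked by hand. You instead factor the cubic (4.2) at this particular $z$ as $(t-w^2)\bigl(t^2 - (A/w)t + w^{-2}\bigr)$, so that the three ``vertices'' are $w^2$ and $u_\pm/w$ with $u_\pm$ the roots of $u^2 - Au + 1$, giving $p_n(w^2+A/w) = w^{2n} + w^{-n}(u_+^n+u_-^n)$ at once and identifying $q_n(A) = u_+^n + u_-^n$ as the standard power sum satisfying the Lucas recurrence. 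Your factorization is easily verified, and your approach is arguably more illuminating: it explains structurally where the decomposition $w^{-n}q_n(A) + w^{2n}$ comes from, makes the Lucas identification essentially automatic (from $u_+ + u_- = A$, $u_+u_- = 1$), and dispenses with the three base-case computations. The one point you rightly flag is that for $|A|>2$ the point $w^2 + A/w$ may lie outside the deltoid, where the triangle-based definition of $p_n$ does not literally apply; your reading of $z_1^n+z_2^n+z_3^n$ as the polynomial $P_n(z,\bar z)$ via Newton's identities (already invoked in the proof of Lemma 4.2) closes that gap, whereas the paper's purely formal induction never has to confront it. Both arguments are sound.
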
 

\begin{proof} 
A direct check reveals that the claim is correct for $n = $ 1, 2 and 3. We now argue by induction for $n > 3$. Assume now that for a given positive $n > 3$, the claims are true for smaller values of $n$. Using the recurrence formula (Lemma 4.2), together with the induction hypothesis, we see $w^n \, p_n(w^2 + A/w) \, - \, w^{3n} \; = \; w^n \, [ \, (w^2 + A/w) \, p_{n-1}(w^2 + A/w) - (w^{-2} + A w) \, p_{n-2}(w^2 + A/w) +  p_{n-3}(w^2 + A/w) \; ]  - \, w^{3n} \; = \;  w^n \, [ \, (w^2 + A/w) (w^{1-n} q_{n-1}(A)+w^{2(n-1)})  - (w^{-2} + A w) (w^{2-n} q_{n-2}(A)+w^{2(n-2)})+  (w^{3-n} q_{n-3}(A)+w^{2(n-3)}) \; ]  - \, w^{3n} \; = \;  A \, q_{n-1}(A) - q_{n-2}(A) + [ \, q_{n-1}(A) - A \, q_{n-2}(A) + q_{n-3}(A)  \, ] \, w^3 \; = \; A \, q_{n-1}(A) - q_{n-2}(A) \; = \; A \, (-i)^{n-1} \, L_{n-1}(i A) - (-i)^{n-2} L_{n-2}(i A) \; = \;  (-i)^n \, [ \, (i A) \, L_{n-1}(i A) + L_{n-2}(i A) \, ] \; = \; (-i)^n L_n(i A)$.  Thus the claims are true for this particular value of $n$. By induction, the lemma is true. 

\end{proof}

\begin{lem}
Still assuming that $A$ is real, $4 - q_n(A)^2 \; = \; (-1)^n (A^2 - 4) \, F_n(i A)^2$, where $F_n$ is the $n-th$ Fibonacci polynomial. 
\end{lem}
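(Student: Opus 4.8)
The plan is to reduce the claim to the classical Pell-type identity linking Lucas and Fibonacci polynomials, using the explicit formula $q_n(A) = (-i)^n L_n(iA)$ already furnished by Lemma 5.2. Squaring that formula gives $q_n(A)^2 = (-i)^{2n} L_n(iA)^2 = (-1)^n L_n(iA)^2$, so everything comes down to understanding $L_n(iA)^2$.

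The key ingredient I would invoke (or prove) is the identity $L_m(x)^2 - (x^2+4)\,F_m(x)^2 = 4(-1)^m$, valid for all $m \ge 0$. This is standard: writing $\phi,\psi = \tfrac{1}{2}\bigl(x \pm \sqrt{x^2+4}\bigr)$ for the roots of $t^2 - xt - 1 = 0$, one has $L_m = \phi^m + \psi^m$, $F_m = (\phi^m - \psi^m)/(\phi - \psi)$, $\phi\psi = -1$, and $(\phi - \psi)^2 = x^2+4$, whence $L_m^2 - (x^2+4)F_m^2 = (\phi^m+\psi^m)^2 - (\phi^m-\psi^m)^2 = 4(\phi\psi)^m = 4(-1)^m$. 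If one prefers to avoid Binet-type formulas, the same identity follows by an easy induction on $m$ once one also records that the mixed quantity $L_{m+1}F_m - L_m F_{m+1}$ is constant, which is immediate from the shared recurrence $P_{m+1} = x P_m + P_{m-1}$.

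Having this, I would substitute $x = iA$. Since $(iA)^2 + 4 = 4 - A^2$, the identity becomes $L_n(iA)^2 = (4-A^2)\,F_n(iA)^2 + 4(-1)^n$. Multiplying by $(-1)^n$ and using the first paragraph, $q_n(A)^2 = (-1)^n(4-A^2)F_n(iA)^2 + 4(-1)^{2n} = (-1)^n(4-A^2)F_n(iA)^2 + 4$, and therefore $4 - q_n(A)^2 = -(-1)^n(4-A^2)F_n(iA)^2 = (-1)^n(A^2-4)F_n(iA)^2$, which is exactly the asserted formula.

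The only place demanding genuine care is the bookkeeping of the powers of $i$ together with the sign reversal caused by $(iA)^2 = -A^2$; there is no real obstacle beyond that, since Lemma 5.2 has already done the work of identifying $q_n$ with a Lucas polynomial. A useful sanity check is $n = 2$: here $q_2(A) = A^2 - 2$, so $4 - q_2(A)^2 = A^2(4-A^2)$, while $(-1)^2(A^2-4)F_2(iA)^2 = (A^2-4)(iA)^2 = A^2(4-A^2)$, confirming the signs.
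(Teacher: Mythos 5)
Your argument is correct and is essentially the paper's own proof: both reduce the claim to the Pell-type identity $L_n(x)^2 - (x^2+4)F_n(x)^2 = 4(-1)^n$, obtained from the Binet-style closed forms, and then substitute $x = iA$ together with $q_n(A) = (-i)^n L_n(iA)$ from Lemma 5.2. Your derivation of that identity via the roots $\phi,\psi$ of $t^2 - xt - 1 = 0$ is just a slightly more explicit version of the computation the paper leaves as ``straightforward to deduce.''
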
 

\begin{proof}
It is known that 

$$F_n(x) = \frac{(x+\sqrt{x^2+4})^n - (x-\sqrt{x^2+4})^n}{2^n \sqrt{x^2+4}} \; \hbox{and} \, L_n(x) = \frac{(x+\sqrt{x^2+4})^n + (x-\sqrt{x^2+4})^n}{2^n} {\ \atop .}$$ 

\noindent From these formulas, it is straightforward to deduce that $L_n(x)^2 - (x^2 + 4) F_n(x)^2 \; = \; 4 \, (-1)^n.$ Therefore, $4 - q_n(A)^2 \; = \; 4 - (-1)^n \, L_n(i A)^2 \; = \; 4 - (-1)^n \, [ \, (-A^2 +4) F_n(i A)^2 + 4(-1)^n \, ] \; = \; (-1)^n \, (A^2 - 4) \, F_n(i A)^2$. 

\end{proof}

\begin{lem}
When $p_n(z)$ is used in place of $z$ in $z^2 \, \bar{z}^2  - 4(z^3 + \bar{z}^3 ) + 18 z \bar{z} - 27$, and then $w^2 + A/w$ is substituted for $z$, where $A$ is real and $|w| = 1$,  the resulting expression is divisible by $(A^2 - 4) \, F_n(i A)^2$. So the resulting expression is identically zero when $A = \pm 2$ as well as when $i A$ is a root of $F_n$. 
\end{lem}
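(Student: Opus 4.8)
I would prove this by chaining Lemmas 5.1, 5.2 and 5.3 together; there is almost no work beyond bookkeeping once these are in hand. The key observation is that Lemma 5.2 shows that after the substitution $z = w^2 + A/w$ the quantity $p_n(z)$ already has exactly the shape $w^{-n}B + w^{2n}$ to which Lemma 5.1 applies, so that Lemma 5.1 will factor the deltoid polynomial evaluated at this point, and Lemma 5.3 will then exhibit the desired factor $(A^2-4)F_n(iA)^2$ inside that factorization.

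\textbf{Steps.} First I would set $B = q_n(A)$ and note that $B$ is real whenever $A$ is real: this follows from the recurrence $q_n(A) = A\,q_{n-1}(A) - q_{n-2}(A)$ of Lemma 5.2 together with the base cases $q_1(A) = A$, $q_2(A) = A^2-2$, which shows each $q_n$ has integer coefficients. Then, by Lemma 5.2, $p_n(w^2 + A/w) = w^{-n}B + w^{2n}$, and since $|w| = 1$ and $B$ is real its complex conjugate is $w^{n}B + w^{-2n}$; hence replacing $z$ by $p_n(z)$ in $z^2\bar z^2 - 4(z^3+\bar z^3)+18z\bar z-27$ and then putting $z = w^2 + A/w$ is literally the substitution $z \mapsto w^{-n}B + w^{2n}$ treated in Lemma 5.1. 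Applying Lemma 5.1, the result is
$$(B-2)(B+2)\,w^{-6n}\left(1 - B w^{3n} + w^{6n}\right)^2 \;=\; -\left(4 - q_n(A)^2\right)w^{-6n}\left(1 - q_n(A)w^{3n} + w^{6n}\right)^2 .$$
Finally I would invoke Lemma 5.3, $4 - q_n(A)^2 = (-1)^n(A^2-4)F_n(iA)^2$, to rewrite this as
$$(-1)^{n+1}(A^2-4)\,F_n(iA)^2\,w^{-6n}\left(1 - q_n(A)w^{3n} + w^{6n}\right)^2 ,$$
which is manifestly divisible by $(A^2-4)F_n(iA)^2$ and vanishes identically in $w$ when $A^2-4=0$ (i.e.\ $A = \pm2$) or when $F_n(iA) = 0$ (i.e.\ $iA$ is a root of the $n$-th Fibonacci polynomial).

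\textbf{Main difficulty.} The only step requiring any care is the reality of $q_n(A)$ for real $A$, since this is what guarantees $\overline{p_n(w^2+A/w)} = w^{n}B + w^{-2n}$ and hence lets Lemma 5.1 be used verbatim rather than merely as a formal polynomial identity. That reality is immediate from the integer-coefficient recurrence of Lemma 5.2, so in truth there is no genuine obstacle here — the statement is essentially a corollary of the three preceding lemmas.
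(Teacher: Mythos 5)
Your proposal is correct and follows essentially the same route as the paper: substitute via Lemma 5.2 to get $p_n(w^2+A/w)=w^{-n}q_n(A)+w^{2n}$, factor via Lemma 5.1 with $B=q_n(A)$, and convert $q_n(A)^2-4$ via Lemma 5.3. Your extra remark verifying that $q_n(A)$ is real (so that Lemma 5.1's implicit conjugation $\bar z = w^nB+w^{-2n}$ is legitimate) is a small point the paper leaves tacit, but it does not change the argument.
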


\begin{proof}
$p_n(z) = p_n(w^2 + A/ w) = w^{-n} q_n(A) + w^{2n}$, by Lemma 5.2, and when this is substituted for $z$ in $z^2 \, \bar{z}^2  - 4(z^3 + \bar{z}^3 ) + 18 z \bar{z} - 27$, the result equals $(q_n(A)^2 - 4) w^{-6n} (1 - q_n(A) w^{3n} + w^{6n})^2$, by Lemma 5.1. By Lemma 5.3, this is divisible by $(A^2 - 4) \, F_n(i A)^2$. The rest is then evident. \\
\end{proof} 

\begin{figure}[ht]
  \centering
  \includegraphics[width=6cm]{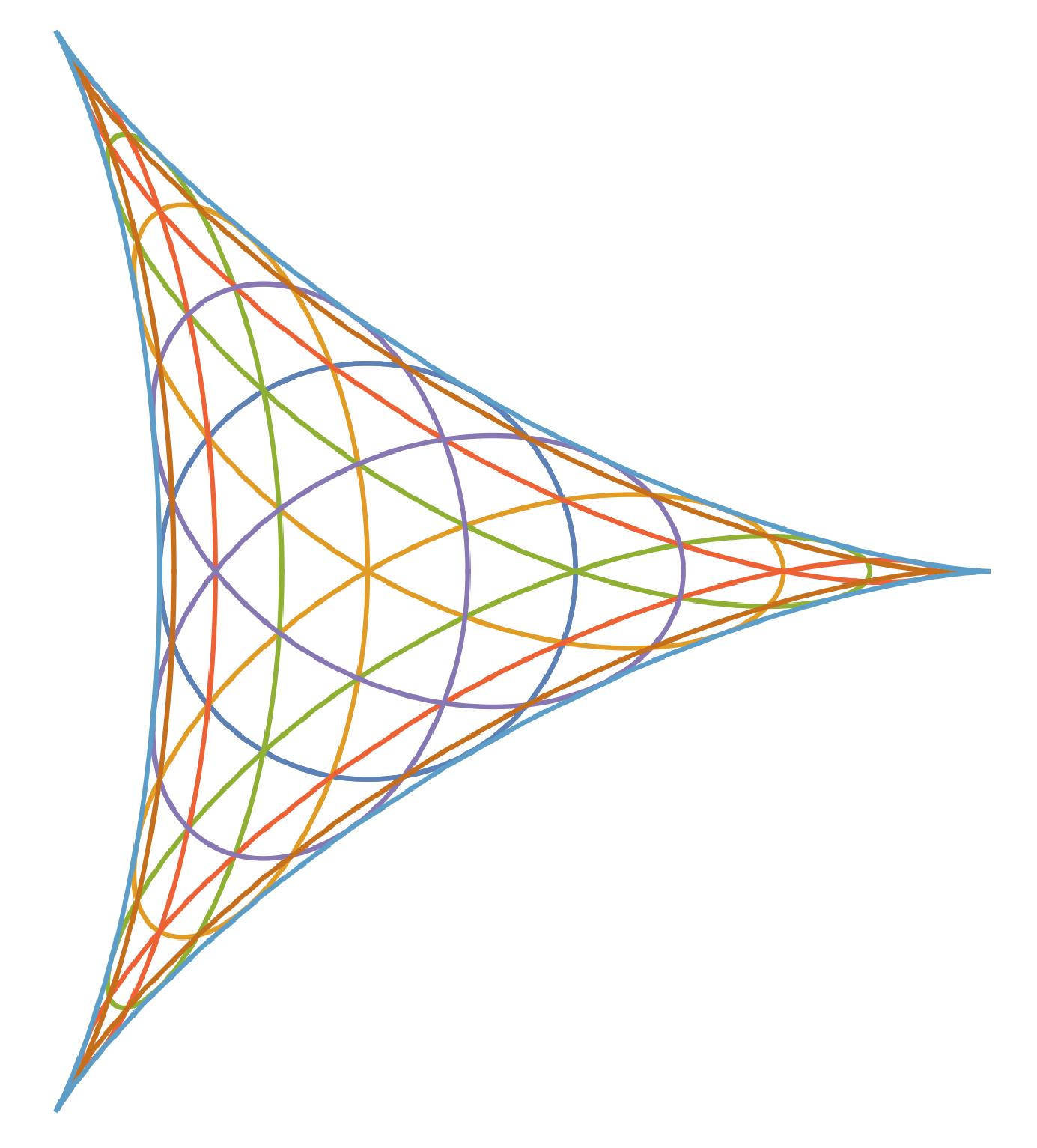} 
  \caption{Curves mapped to the deltoid via $p_{12}$.}
  \label{fig:curves}
\end{figure}

\begin{thm} 
Fix a positive integer $n$. If $n$ is even, let $A$ be one of the numbers $2\sin(j \pi / n)$ ($j = 0, 1, 2, ..., (n-2)/2)$. But if $n$ is odd, let $A$ be one of the numbers $2 \sin((2j+1) \pi / 2n)$ ($j = 0, 1, 2, ..., (n-3)/2$). The points on the curve that is parameterized by $A e^{i \theta} + e^{-2 i \theta}$ are mapped,  under the mapping $z \rightarrow p_n(z)$, to the deltoid. The deltoid is also mapped to the deltoid.   
\end{thm}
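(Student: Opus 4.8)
The plan is to reduce the statement to the lemmas already proved in this section, together with the known closed form for the Fibonacci polynomials. First I would check that $p_n$ is actually defined on the given curves: for $0\le A\le 2$ the point $A e^{i\theta}+e^{-2i\theta}$ is on or inside the deltoid. Indeed, writing $A=2\lambda_0$ with $|\lambda_0|\le 1$, Corollary 3.5 exhibits it as the orthocenter $z_H$ of the amenable triangle with vertices $e^{-2i\theta}$ and $(\lambda_0\pm i\sqrt{1-\lambda_0^2})\,e^{i\theta}$, and Theorem 3.4 then places it on or inside the deltoid. (Equivalently, Lemma 5.1 with its free exponent set to $1$, $B=A$ and $w=e^{-i\theta}$, shows the left side of (1.2) evaluates there to $(A^2-4)(2\cos 3\theta-A)^2\le 0$.) Since replacing $\theta$ by $\theta+\pi$ turns the curve for $A$ into the curve for $-A$, I may as well assume $A\ge 0$, which covers every value of $A$ occurring in the statement.

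Next I would put $w=e^{-i\theta}$, so that $w^2+A/w=A e^{i\theta}+e^{-2i\theta}$, and apply Lemma 5.2 to get $p_n(A e^{i\theta}+e^{-2i\theta}) = q_n(A)\,e^{in\theta}+e^{-2in\theta}$. This image is of the form $v^{-n}B+v^{2n}$ with $v=e^{-i\theta}$, $|v|=1$ and $B=q_n(A)$, so Lemma 5.1 evaluates the left side of (1.2) at this point to $(q_n(A)^2-4)\,e^{6in\theta}\,(1-q_n(A)e^{-3in\theta}+e^{-6in\theta})^2$. Hence the whole curve is carried into the deltoid as soon as $q_n(A)^2=4$; and by Lemma 5.3, $q_n(A)^2=4$ holds exactly when $(A^2-4)F_n(iA)^2=0$, that is, when $A=\pm 2$ or $F_n(iA)=0$.

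It then remains to read off the real numbers $A$ with $F_n(iA)=0$ and to handle the value $A=2$. Using the closed form for $F_n$ recorded in the proof of Lemma 5.3, write $A=2\sin\psi$ with $\psi\in[0,\pi/2]$ (possible since $0\le A\le 2$); then $iA+\sqrt{(iA)^2+4}=2e^{i\psi}$ while $iA-\sqrt{(iA)^2+4}=-2e^{-i\psi}$, so $F_n(iA)=(e^{in\psi}-(-1)^n e^{-in\psi})/\sqrt{4-A^2}$, which is a nonzero multiple of $\sin n\psi$ when $n$ is even and of $\cos n\psi$ when $n$ is odd. Setting this equal to zero and discarding $\psi=\pi/2$ (which is the already-covered case $A=2$, where the closed form degenerates) yields exactly $\psi=j\pi/n$ with $0\le j\le(n-2)/2$ for $n$ even, and $\psi=(2j+1)\pi/(2n)$ with $0\le j\le(n-3)/2$ for $n$ odd; since $A=2\sin\psi$, this is precisely the list in the statement. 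Finally, when $A=2$ the curve is the deltoid itself, and $p_n(2e^{i\theta}+e^{-2i\theta})=2e^{in\theta}+e^{-2in\theta}$ is again a point of the deltoid (equivalently $q_n(2)^2=4$, since $A^2-4=0$ in Lemma 5.3), which proves the last assertion.

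I expect the only real work to lie in the last paragraph: extracting the real roots of $F_n(iA)$ from the closed form and matching the even and odd cases to the precise index ranges $0\le j\le(n-2)/2$ and $0\le j\le(n-3)/2$, while cleanly separating off the boundary value $A=2$. Everything else is a direct chaining of Lemma 5.2, Lemma 5.1 and Lemma 5.3, with Corollary 3.5 used only to see that $p_n$ is defined on the curves in question.
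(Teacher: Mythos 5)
Your proof is correct and follows essentially the same route as the paper's: both reduce the claim to Lemma 5.4 (which you re-assemble from Lemmas 5.1--5.3) by setting $w=e^{-i\theta}$, so that the deltoid polynomial evaluated at $p_n(Ae^{i\theta}+e^{-2i\theta})$ becomes a multiple of $(A^2-4)F_n(iA)^2$. The only differences are cosmetic: the paper cites Hoggatt--Bicknell for the fact that the listed values of $A$ make $iA$ a root of $F_n$, whereas you derive this directly (and correctly) from the Binet-type closed form, and the paper handles the deltoid-to-deltoid claim by noting that a degenerate triangle stays degenerate under $z\mapsto z^n$ rather than by your explicit computation with $q_n(2)=2$.
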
 

\begin{proof}
From the results in \cite{HB}, $i A$ is a root of the Fibonacci polynomial $F_n$. This claim about the curve $A e^{i \theta} + e^{-2 i \theta}$ now follows immediately from Lemma 5.4, upon setting $w = e^{-i \theta}$. The deltoid is or course mapped to itself. $z_1$, $z_2$ and $z_3$ are not distinct when $z_H$ is on the deltoid, and so $z_1^n$, $z_2^n$ and $z_3^n$ are not distinct, and so $p_n(z_H)$ is also on the deltoid. 
\end{proof} 

It is worth noting that $A e^{i \theta} + e^{-2 i \theta}$ with $A = 1$ describes a trifolium curve. Figure 2 illustrates Theorem 5.5 for the case when $n = 12$. We will next see that the function $p_n$ maps needles to needles.

\begin{thm} 

Fix a real number $\theta$. Consider the needle whose slope is $\tan \theta$, that is, the needle parameterized by $2 \lambda e^{i \theta} + e^{-2i \theta}$ ($-1 \le \lambda \le 1$).  Fix also an integer $n$. The function $p_n$ maps the needle with slope $\tan \theta$ to the needle with slope $\tan n \theta$.  

\end{thm}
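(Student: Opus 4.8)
The plan is to exploit Corollary 3.5, which pins down exactly which amenable triangles have their orthocenter on a given needle. Fix $\theta$ and suppose $z_H = 2\lambda_0 e^{i\theta} + e^{-2i\theta}$ lies on the needle of slope $\tan\theta$, so $-1 \le \lambda_0 \le 1$. By Corollary 3.5 the vertex multi-set is forced to be $\{\, e^{-2i\theta},\ (\lambda_0 + i\sqrt{1-\lambda_0^2})\,e^{i\theta},\ (\lambda_0 - i\sqrt{1-\lambda_0^2})\,e^{i\theta}\,\}$. Writing $\lambda_0 = \cos\psi$ for a real $\psi$, this is $\{\, e^{-2i\theta},\ e^{i(\theta+\psi)},\ e^{i(\theta-\psi)}\,\}$. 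The point of this reparametrization is that raising to the $n$-th power is now transparent: the new vertex set is $\{\, e^{-2in\theta},\ e^{in(\theta+\psi)},\ e^{in(\theta-\psi)}\,\}$.

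Next I would simply add these up to compute $p_n(z_H) = e^{-2in\theta} + e^{in(\theta+\psi)} + e^{in(\theta-\psi)} = e^{-2in\theta} + 2\cos(n\psi)\, e^{in\theta}$. Setting $\mu_0 := \cos(n\psi)$, this reads $p_n(z_H) = 2\mu_0 e^{in\theta} + e^{-2in\theta}$ with $-1 \le \mu_0 \le 1$, which is exactly a point on the needle of slope $\tan(n\theta)$ (using Lemma 1.1's description of that needle as $\{\, 2\mu e^{in\theta} + e^{-2in\theta} : -1 \le \mu \le 1\,\}$). As $\lambda_0$ ranges over $[-1,1]$, $\psi$ ranges over $[0,\pi]$ and $\mu_0 = \cos(n\psi)$ ranges over all of $[-1,1]$ (indeed surjectively), so the whole needle of slope $\tan\theta$ is carried onto the whole needle of slope $\tan(n\theta)$.

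There is essentially one routine caveat to dispatch: Corollary 3.5 is stated for the generic case, and one should check the degenerate endpoints $\lambda_0 = \pm 1$ (where two vertices coalesce) separately, but there the formula $p_n(z_H) = 2\mu_0 e^{in\theta} + e^{-2in\theta}$ still holds by direct substitution ($\psi = 0$ or $\pi$ gives $\mu_0 = (\pm1)^n$), so these are handled as limiting cases just as in Lemma 3.1's proof. One should also note that $p_n$ is well-defined on $z_H$ by Lemma 4.1, and that for negative $n$ the same computation goes through verbatim since $e^{in\theta}$ and $\cos(n\psi)$ make sense for all integers $n$; the case $n=0$ gives the constant $3 = 2\cdot 1 + 1$, which lies on every needle through that point trivially. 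No genuine obstacle is expected here — the content is entirely in Corollary 3.5, and once the vertices are written in the exponential form $e^{i(\theta\pm\psi)}$, $e^{-2i\theta}$, the $n$-th power map acts by $\theta\mapsto n\theta$, $\psi \mapsto n\psi$ and the result falls out. The only thing to be careful about in the write-up is making the surjectivity claim (so that the image is the \emph{entire} target needle, not merely a sub-segment) explicit, which follows from the intermediate value theorem applied to $\psi \mapsto \cos(n\psi)$ on $[0,\pi]$.
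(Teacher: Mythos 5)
Your proposal is correct and takes essentially the same route as the paper: invoke Corollary 3.5 to write the vertices as $e^{-2i\theta}$, $e^{i(\theta\pm\psi)}$ with $\lambda=\cos\psi$, raise to the $n$-th power, and sum to obtain $2\cos(n\psi)\,e^{in\theta}+e^{-2in\theta}$ on the needle of slope $\tan n\theta$. The additional care you take with surjectivity, the degenerate endpoints, and the cases $n\le 0$ goes slightly beyond the paper's write-up but changes nothing essential.
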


\begin{proof} 

By Corollary 3.5, we know that a point $2 \lambda e^{i \theta} + e^{-2i \theta}$ on the needle is the orthocenter for the triangle having the vertex set $\{ \, e^{-2 i \theta}, (\lambda \pm i \sqrt{1-\lambda^2}) \, e^{i\theta} \, \}$. Write $\lambda = \cos \psi$ for some real $\psi$. So the vertex set can be written as $\{ \, e^{-2 i \theta}, e^{i (\theta \pm \psi) } \, \}$. Raising these numbers to the $n$-th power, yields the vertex set $\{ \, e^{-2 i n \theta}, e^{i n (\theta \pm \psi) } \, \}$ for another triangle. This triangle has orthocenter  $e^{-2 i n \theta} + 2 \, e^{i n \theta} \cos n \psi $. This is evidently a point on the needle with slope $n \theta$. 

\end{proof}

\begin{figure}[ht]
  \centering
  \includegraphics[width=6cm]{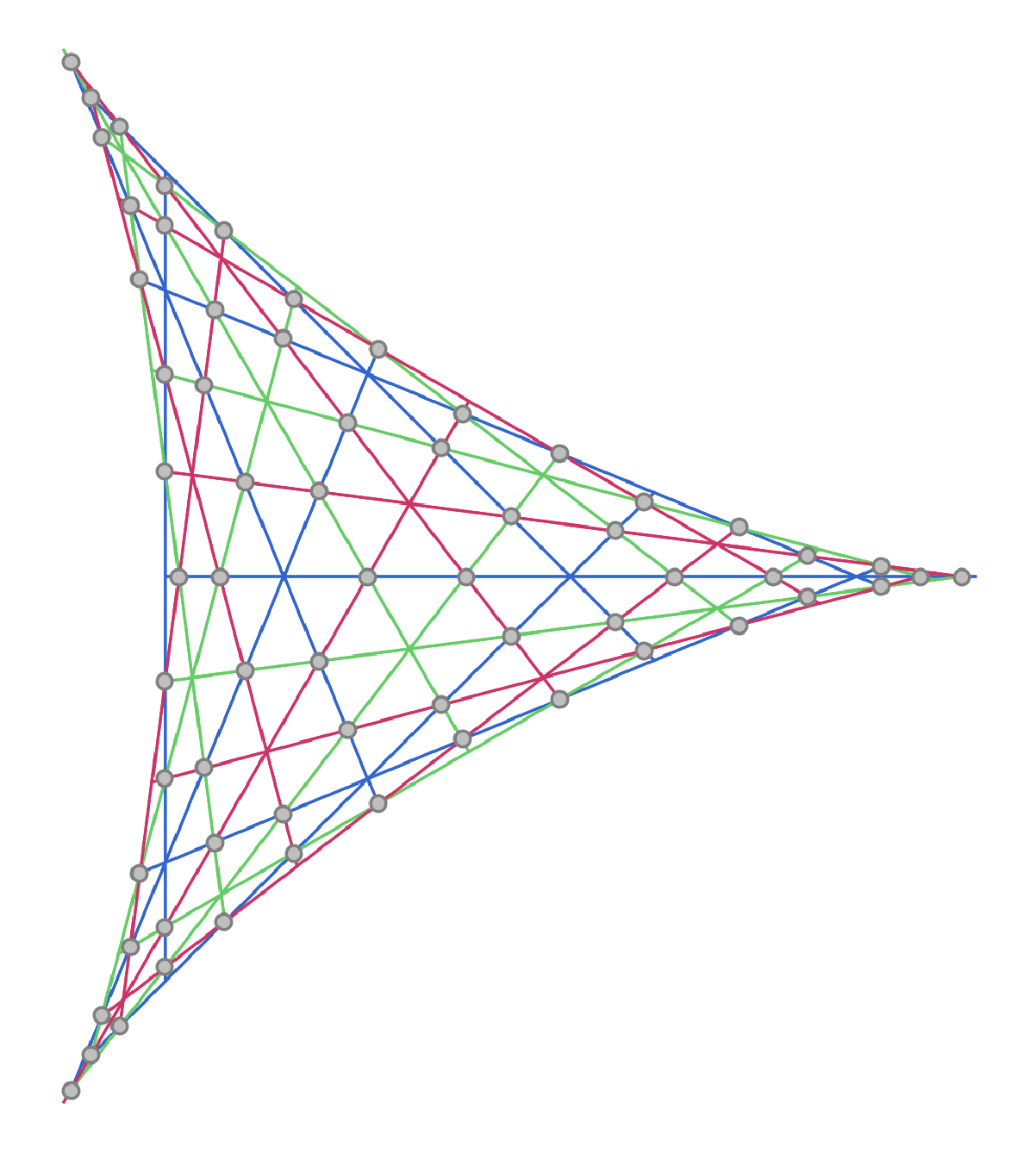} 
  \caption{Crossings of certain needles for $n = 8$.}
  \label{fig:crossings}
\end{figure}

We will close by considering the points that are mapped to zero by $p_n$. If $z_H = z_1+z_2+z_3 = 0$, then it can be reasoned that the triangle is equilateral, and in fact that $\{ z_1, z_2, z_3 \} \; = \; \{ 1, e^{2\pi i / 3}, e^{-2\pi i / 3} \}$. So at the level of the triangles, we are here asking about triangles whose ``$n$-th power" is this equilateral triangle. The following describes the orthocenters for these amenable triangles, {\it i.e.} all of the complex numbers $z$ for which $p_n(z) \; = \; 0$. Figure 3 illustrates this result when $n = 8$. 

\begin{thm} 
Fix a positive integer $n$. For $j_1, j_2 \in \{0, 1, 2, \cdots 3n-1 \}$ with $j_1 \not\equiv j_2$ (mod 3), let  $j_3 \in \{0, 1, 2, \cdots 3n-1 \}$ be such that $3n$ divides $j_1 + j_2 + j_3$. The three needles with slopes $\tan (-\pi j_1 / 3n)$, $\tan (-\pi j_2 / 3n)$ and $\tan (-\pi j_3 / 3n)$ are coincident, and meet at the point $e^{2\pi i j_1 / 3n} + e^{2\pi i j_2 / 3n} + e^{2\pi i j_3 / 3n}$. Moreover, $p_n(e^{2\pi i j_1 / 3n} + e^{2\pi i j_2 / 3n} + e^{2\pi i j_3 / 3n}) \; = \; 0$. In fact, the equation $p_n(z) = 0$ has $n^2$ solutions, all of which can be obtained in this manner. 
\end{thm}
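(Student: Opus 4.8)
The plan is to reduce everything to the needle description already established and to the power-sum polynomial $p_n$. First I would set up the three needles. A needle of slope $\tan\phi$ is, by Lemma 1.1 and Corollary 3.5, parameterized by $2\lambda e^{i\phi}+e^{-2i\phi}$ ($-1\le\lambda\le1$), and the point $2\lambda e^{i\phi}+e^{-2i\phi}$ on it is the orthocenter of the amenable triangle with vertex set $\{e^{-2i\phi},(\lambda\pm i\sqrt{1-\lambda^2})e^{i\phi}\}$. Taking $\phi=-\pi j_k/3n$ and letting $\zeta=e^{2\pi i/3n}$, I would check directly that $S:=\zeta^{j_1}+\zeta^{j_2}+\zeta^{j_3}$ lies on each of the three needles. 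For instance, $S-e^{-2i\phi_k}=S-\zeta^{-2j_k}$; using $j_1+j_2+j_3\equiv0\pmod{3n}$ one has $\zeta^{-2j_k}=\zeta^{j_k}\cdot\zeta^{-3j_k}=\zeta^{j_k}\zeta^{j_1+j_2+j_3-3j_k}$, and a short computation shows $S-\zeta^{-2j_k}=e^{i\phi_k}\cdot(\text{real scalar})\cdot(\text{something})$, so the displacement has argument $\phi_k$ modulo $\pi$, i.e. $S$ sits on the needle of slope $\tan\phi_k$. Concretely, $\{\zeta^{j_1},\zeta^{j_2},\zeta^{j_3}\}$ is an amenable triple (product $=\zeta^{j_1+j_2+j_3}=1$, each of modulus $1$), so by Theorem 3.4 its orthocenter is $S$; and that same triple, read as $\{e^{-2i\phi_k},\zeta^{j_a},\zeta^{j_b}\}$ with $\{a,b\}=\{1,2,3\}\setminus\{k\}$, realizes $S$ as a point of the needle through $e^{-2i\phi_k}$ with the right slope once one verifies $\zeta^{j_a}\zeta^{j_b}=\zeta^{-j_k}=(e^{-2i\phi_k})^{-1/2}$ up to sign — this is exactly the Corollary 3.5 normal form with $e^{i\phi_k}=\pm\zeta^{j_k}$ and $\lambda_0=\cos(\pi(j_a-j_b)/3n)$. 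That gives the coincidence of the three needles at $S$.

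Next I would compute $p_n(S)$. Since $S$ is the orthocenter of the amenable triangle with vertices $\zeta^{j_1},\zeta^{j_2},\zeta^{j_3}$, the definition of $p_n$ gives $p_n(S)=\zeta^{nj_1}+\zeta^{nj_2}+\zeta^{nj_3}=e^{2\pi i j_1/3}+e^{2\pi i j_2/3}+e^{2\pi i j_3/3}$ because $\zeta^n=e^{2\pi i/3}$. The congruence hypothesis $j_1\not\equiv j_2\pmod 3$ together with $j_1+j_2+j_3\equiv0\pmod 3$ (a consequence of $3n\mid j_1+j_2+j_3$) forces $\{j_1,j_2,j_3\}$ to hit all three residues $0,1,2$ modulo $3$, so $\zeta^{nj_1}+\zeta^{nj_2}+\zeta^{nj_3}=1+e^{2\pi i/3}+e^{-2\pi i/3}=0$. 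Hence $p_n(S)=0$.

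For the count, I would argue that $p_n(z)=0$ has exactly $n^2$ solutions and that the construction above produces all of them. Write $z=z_1+z_2+z_3$ with $\{z_1,z_2,z_3\}$ the unique amenable triple of Lemma 4.1; then $p_n(z)=0$ means $z_1^n+z_2^n+z_3^n=0$ while $z_1^nz_2^nz_3^n=1$ and $|z_k^n|=1$, so $\{z_1^n,z_2^n,z_3^n\}$ is an amenable triple with vanishing orthocenter — by the remark preceding the theorem this forces $\{z_1^n,z_2^n,z_3^n\}=\{1,e^{2\pi i/3},e^{-2\pi i/3}\}$. Thus each $z_k$ is a $3n$-th root of unity, say $z_k=\zeta^{j_k}$ with $j_k\in\{0,\dots,3n-1\}$, subject only to $z_1^nz_2^nz_3^n=1$, i.e. $3n\mid j_1+j_2+j_3$, and to the $z_k^n$ being the three distinct cube roots of unity, i.e. $j_1,j_2,j_3$ distinct modulo $3$. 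So the solution set $z$ is exactly $\{\zeta^{j_1}+\zeta^{j_2}+\zeta^{j_3}\}$ over such triples — matching the stated construction. Counting: choose $j_1$ freely ($3n$ ways); $j_2$ must avoid the residue of $j_1$ mod $3$, leaving $2n$ choices; $j_3$ is then determined mod $3n$ by the sum condition, and one checks its residue mod $3$ is automatically the remaining one, hence distinct from $j_1,j_2$. That is $3n\cdot 2n=6n^2$ ordered triples; since the unordered multiset $\{z_1,z_2,z_3\}$ — equivalently the point $z$ — is recovered from $3!=6$ orderings and (because the three $z_k^n$ are pairwise distinct) these six orderings are genuinely distinct, we get $6n^2/6=n^2$ distinct values of $z$. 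Finally I would note injectivity of the map from unordered triples to $z$ on this locus, which holds because the triple is recovered from $z$ via the cubic (4.2), so no two distinct admissible multisets give the same $z$; hence exactly $n^2$ solutions.

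The main obstacle I anticipate is the bookkeeping in the counting step: verifying that once $j_1,j_2$ are chosen with distinct residues mod $3$, the $j_3$ forced by $3n\mid j_1+j_2+j_3$ automatically has the third residue mod $3$ (so the mod-$3$ distinctness is not an extra constraint that would shrink the count), and then confirming that the $6\to1$ collapse from ordered triples to the point $z$ is clean with no coincidental over-counting — this relies precisely on the $z_k^n$ being pairwise distinct, which is where the hypothesis $j_1\not\equiv j_2\pmod 3$ does its work. The collinearity/coincidence part and the evaluation $p_n(S)=0$ are comparatively routine given Lemma 1.1, Corollary 3.5, Theorem 3.4, and the remark on equilateral triangles.
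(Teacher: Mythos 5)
Your proposal is correct, and its first two parts track the paper closely: the paper also verifies that $S=e^{2\pi i j_1/3n}+e^{2\pi i j_2/3n}+e^{2\pi i j_3/3n}$ lies on each needle by writing $S-e^{-2i\theta_k}$ as $2e^{i\theta_k}\cos(\theta_1+2\theta_2)$ (your ``real scalar times $e^{i\phi_k}$'' computation), and it evaluates $p_n(S)$ by identifying the amenable triple of $S$ with $\{\zeta^{j_1},\zeta^{j_2},\zeta^{j_3}\}$ so that the $n$-th powers are the three distinct cube roots of unity. Where you genuinely diverge is the final counting step. The paper exhibits the $n^2$ solutions and then caps the total by a resultant argument: writing $p_n(z)=P_n(z,\bar z)$ with $P_n$ a bivariate polynomial of degree $n$, it notes that $p_n(z)=0$ forces the resultant of $P_n(z,w)$ and $P_n(w,z)$ (eliminating $w$) to vanish, and that resultant has degree at most $n^2$ in $z$. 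You instead classify all solutions directly: by Lemma 4.1 the unique amenable triple summing to $z$ must have $n$-th powers forming the unique amenable triple with zero orthocenter (both $\sigma_1$ and $\sigma_2$ vanish in (4.2), leaving $z^3-1=0$), so each $z_k$ is a $3n$-th root of unity with the stated residue constraints; your $6n^2/6=n^2$ bookkeeping plus the injectivity of multiset $\mapsto$ orthocenter from Lemma 4.1 then finishes it. This is more elementary (no resultant theory) and proves, rather than infers from a count, that every solution arises from the construction. Two harmless slips: the condition $3n\mid j_1+j_2+j_3$ comes from $z_1z_2z_3=1$, not from $z_1^nz_2^nz_3^n=1$; and the normalization in your Corollary 3.5 matching should read $e^{-2i\phi_k}=\zeta^{j_k}$ and $\zeta^{j_a}\zeta^{j_b}=(e^{-2i\phi_k})^{-1}$. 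Neither affects the argument.
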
 

\begin{proof}

The three needles are described parametrically as $2 \lambda e^{i \theta_1} + e^{-2i \theta_1}$, $2 \lambda e^{i \theta_2} + e^{-2i \theta_2}$ and $2 \lambda e^{i \theta_3} + e^{-2i \theta_3}$, where $\theta_k \; = \; -\pi j_k / 3n$ ($k = 1, 2, 3$). The point $e^{-2i \theta_1} + e^{-2i \theta_2}  + e^{-2i \theta_3}$ is the intersection of these three needles. It is on the first needle because $e^{-2i \theta_1} + e^{-2i \theta_2}  + e^{-2i \theta_3} \; = \;   e^{-2i \theta_1} + e^{i [\theta_1 - (\theta_1+2\theta_2)]}  +  e^{i [\theta_1 + (\theta_1+2\theta_2)] } \; = \;   e^{-2i \theta_1} + 2 \, e^{i \theta_1} \cos (\theta_1+2\theta_2).$ Similarly for the second and third needles. 

Now consider the triple $\{z_1, z_2, z_3\}$ satisfying $|z_1| = |z_2| = |z_3| = z_1 z_2 z_3 = 1$ and $z_H \; = \; z_1 + z_2 + z_3 \; = \; e^{-2i \theta_1} + e^{-2i \theta_2}  + e^{-2i \theta_3}$. Since $z_H$ is on the first needle, by Corollary 3.5, one of the numbers $z_1$, $z_2$, $z_3$ must be $e^{-2i \theta_1}$. Similarly, one must be  $e^{-2i \theta_2}$, and one must be  $e^{-2i \theta_3}$. But these must be distinct because of the restictions on $j_1$, $j_2$ and $j_3$. So, $\{z_1, z_2, z_3\} \; = \;  \{ e^{-2i \theta_1}, e^{-2i \theta_2}, e^{-2i \theta_3} \} \; = \; \{ e^{2\pi i j_1 / 3n} , e^{2\pi i j_2 / 3n}, e^{2\pi i j_3 / 3n} \}$. So, $\{z_1^n, z_2^n, z_3^n\} \; = \; \{ e^{2\pi i j_1 / 3} , e^{2\pi i j_2 / 3}, e^{2\pi i j_3 / 3} \}\; = \; \{ 1, e^{2\pi i / 3}, e^{-2\pi i / 3} \} $. So, $p_n(z_H) \; = \; 0$.  

By varying $j_1$ and $j_2$ (and so also $j_3$) we obtain in this way $n^2$ distinct solutions to the equation $p_n(z) \; = \; 0$. Of course, because of the complex conjugate of $z$ appearing in (4.4), $p_n(z)$ is not a polynomial function of $z$. However, it is easy to see that there is a degree-$n$ polynomial function $P_n$ of {\it two} variables such that $p_n(z) \; = \; P_n(z, \bar{z})$, and that $\overline{p_n(z)} \; = \; p_n(\bar{z}) \; = \; P_n(\bar{z}, z)$.  So if $p_n(z) \; = \; 0$ then $P_n(z, \bar{z}) \; = \; P_n(\bar{z}, z) \; = \; 0$.  Using $w$ as a variable that is independent of $z$, the condition $p_n(z) \; = \; 0$ (for some $z$) implies that the system of equations  $P_n(z, w) \; = \; 0$ and $P_n(w, z) \; = \; 0$ has a common solution for $w$. The resultant polynomial in $z$, obtained by eliminating $w$ from the system of equations must then vanish too. However, by the theory of resultants, this polynomial in $z$ can have degree at most $n^2$, and so have at most $n^2$ roots. So there are at most $n^2$ solutions to the equation $p_n(z) \; = \; 0$. Since we have already identified $n^2$ solutions, these must be all of the solutions.     
\end{proof}


\end{document}